\documentclass[11pt]{amsart}   
\usepackage[english]{babel}
\usepackage{mathtools}
\usepackage{amssymb,amscd, amsthm,amsmath,epsfig, graphics,color,latexsym,cancel}   
\usepackage[linktoc=all, pagebackref, hyperindex]{hyperref}%
\hypersetup{colorlinks,  citecolor=blue,  filecolor=blue,  linkcolor=blue,  urlcolor=black}

\usepackage{tikz-cd} 
\usepackage{extpfeil}
\usepackage{tikz}
\usetikzlibrary{matrix,calc}
\usepackage{mathrsfs}
\usepackage[all]{xy}
\usepackage{amsfonts}
\usepackage[normalem]{ulem}
\usepackage{euscript}
\usepackage{amsmath}
\usepackage{enumitem}
\usepackage{ifpdf}
\usepackage{cleveref}
\usepackage{comment}
\LARGE\textwidth=6in
\newtheorem{Theorem}{Theorem}[section]
\newtheorem{Lemma}[Theorem]{Lemma}
\newtheorem{Corollary}[Theorem]{Corollary}
\newtheorem{Proposition}[Theorem]{Proposition}

\theoremstyle{definition}
\newtheorem{Remark}[Theorem]{Remark}
\newtheorem{Example}[Theorem]{Example}

\newtheorem{Definition}[Theorem]{Definition}

\def\sqr#1#2{{\vcenter{\hrule height.#2pt
			\hbox{\vrule width.#2pt height#1pt \kern#1pt
				\vrule width.#2pt}
			\hrule height.#2pt}}}
\def\phi{\varphi}

\def\VaVa{{\mathcal V}\kern-5pt {\mathcal V}}
\def\gr#1#2{{\rm gr}\, _{#1}(#2)}
\def\gr{{\rm gr}\,}

\def\depth{{\rm depth}\,}

\def\Min{{\rm Min}\,}
\def\codim{{\rm codim}\,}

\def\grade{{\rm grade}\,}
\def\rk{\rm rank}

\def\Ext#1#2#3#4{{\rm Ext}\,^{#1}_{#2}({#3},{#4})}

\def\supp#1{{\rm Supp}\, (#1)}

\def\ini{\mbox{\rm in}}

\def\cl#1{{\mathcal #1}}

\def\phi{\varphi}

\def\grade{{\rm grade}\,}

\def\fm{{\mathfrak m}}

\def\fp{{\mathfrak p}}

\def\fm{{\mathfrak m}}

\def\cl#1{{\cal #1}}
\def\rk{\rm rank}

\newcommand{\excise}[1]{}

\def\NZQ{\mathbb}               

\def\AA{{\NZQ A}}
\def\PP{{\NZQ P}}

\def\G{{\mathcal G}}

\def\opn#1#2{\def#1{\operatorname{#2}}} 
\opn\chara{char} \opn\length{\lambda} \opn\pd{pd} \opn\rk{rk}
\opn\projdim{proj\,dim} \opn\injdim{inj\,dim} \opn\rank{rank}
\opn\depth{depth} \opn\grade{grade} \opn\height{height}
\opn\embdim{emb\,dim} \opn\codim{codim}

\opn\Tr{Tr} \opn\bigrank{big\,rank}
\opn\superheight{superheight}\opn\lcm{lcm}
\opn\trdeg{tr\,deg}
	\opn\reg{reg} \opn\lreg{lreg} \opn\ini{in} \opn\lpd{lpd}
	\opn\size{size} \opn\sdepth{sdepth}
	\opn\link{link}\opn\fdepth{fdepth}\opn\lex{lex}
	\opn\tr{tr}
	\opn\type{type}
	\opn\div{div} \opn\Div{Div} \opn\cl{cl} \opn\Cl{Cl}
	\opn\Spec{Spec} \opn\Supp{Supp} \opn\supp{supp} \opn\Sing{Sing}
	\opn\Ass{Ass} \opn\Min{Min}\opn\Mon{Mon}
	\opn\Ho{H} \opn\der{Der}
	\opn\Ann{Ann} \opn\Rad{Rad} \opn\Soc{Soc}
	\opn\Im{Im} \opn\Ker{Ker} \opn\Coker{Coker} \opn\Am{Am}
	\opn\Hom{Hom} \opn\Tor{Tor} \opn\Ext{Ext} \opn\End{End}
	\opn\Aut{Aut} \opn\id{id}
	
	\opn\nat{nat}
	\opn\pff{pf}
	\opn\Pf{Pf} \opn\GL{GL} \opn\SL{SL} \opn\mod{mod} \opn\ord{ord}
	\opn\Gin{Gin} \opn\Hilb{Hilb}\opn\sort{sort}
	\opn\PF{PF}\opn\Ap{Ap} \opn\HS{HS}
	\opn\aff{aff} \opn
	\con{conv} \opn\relint{relint} \opn\st{st}
	\opn\lk{lk} \opn\cn{cn} \opn\core{core} \opn\vol{vol}  \opn\inp{inp} \opn\nilpot{nilpot}
	\opn\link{link} \opn\star{star}\opn\lex{lex}\opn\set{set}
	\opn\width{wd}
	\opn\Fr{F}
	\opn\QF{QF}
	\opn\G{G}
	\opn\type{type}\opn\res{res}
	\opn\log{Log}
	\opn\gr{gr}
	
	\def\pot#1#2{#1[\kern-0.28ex[#2]\kern-0.28ex]}

	\opn\dirlim{\underrightarrow{\lim}}
	\opn\inivlim{\underleftarrow{\lim}}
\begin{document}
		
		\title[Veronese Avoiding Hypersurfaces]{Veronese Avoiding Hypersurfaces}
	\author{Giovanna Ilardi}
		\address{Dipartimento Matematica Ed Applicazioni “R. Caccioppoli” Universit`a Degli Studi	Di Napoli “Federico II” Via Cintia - Complesso Universitario Di Monte S. Angelo 80126 - Napoli - Italia}
		\email{giovanna.ilardi@unina.it}
		\author{Abbas Nasrollah Nejad}
		\address{Department of Mathematics, Institute for Advanced Studies in Basic Sciences (IASBS), Zanjan 45137-66731, Iran}
		\email{abbasnn@iasbs.ac.ir}
		\author{Saeed Tafazolian}
		\address{Universidade Estadual de Campinas (UNICAMP) \\ Instituto de Matem\'atica, Estat\'{\i}stica e Computação Cient\'{\i}fica (IMECC) \\ Departamento de Matem\'atica \\
			Rua S\'ergio Buarque de Holanda, 651\\ 13083-970 Campinas-SP, Brazil}
		\email{saeed@unicamp.br}
\subjclass[2010]{14B05, 14J70, 14M10, 13D40} 	
\keywords{Veronese-Avoiding hypersurfaces, Milnor Algebra, Macaulay inverse systems, nodal singularities}
\maketitle
\begin{abstract}
We introduce Veronese-Avoiding hypersurfaces, inspired by the theory of associated forms of Alper--Isaev. In the smooth case, we reinterpret their criterion via Macaulay inverse systems: the Veronese-Avoiding condition is equivalent to the non-degeneracy of the associated form. In the singular case, our main theorem shows that a reduced hypersurface with exactly $n$ isolated singular points is Veronese-Avoiding if and only if these points are ordinary nodes in general linear position; we also classify singular plane cubics and treat fewer than $n$ nodes via a natural rational map. We then study the parameter space, proving local closedness and identifying a distinguished irreducible nodal locus. Finally, we prove a Lefschetz-type consequence for the 
Milnor algebra in degree $1$.
\end{abstract}
\section{Introduction}

Let $R=k[x_1,\ldots,x_n]$ be a standard graded polynomial ring over an
algebraically closed field $k$ of characteristic zero, and let $f\in R_d$ be a
reduced homogeneous polynomial of degree $d\ge 3$. We denote by
$X=V(f)\subseteq \PP_k^{\,n-1}$ the corresponding projective hypersurface. The
gradient ideal of $f$ is
\[
J_f=\left(\frac{\partial f}{\partial x_1},\ldots,
\frac{\partial f}{\partial x_n}\right),
\]
and the quotient $M_f=R/J_f$ is the Milnor algebra of $f$. Since
$\operatorname{char}(k)=0$, Euler's relation gives $f\in J_f$, so the gradient
ideal defines the singular subscheme of $X$.

The purpose of this paper is to study a geometric condition on the top graded
piece of the Jacobian ideal. Set $T=n(d-2)$, and let
$V_n^m\subseteq \PP(R_m)$ denote the degree-$m$ Veronese variety, that is, the
image of the map $[\ell]\mapsto [\ell^m]$. We say that $X$ is
\emph{Veronese-Avoiding} if
\[
\codim_{\PP(R_{T-1})}\PP((J_f)_{T-1})=n
\quad\text{and}\quad
\PP((J_f)_{T-1})\cap V_n^{T-1}=\emptyset.
\]
The first condition will be called the \emph{gradient-generic condition}, while
the second is the \emph{Veronese-avoidance condition}.
Equivalently, the first condition says that $\dim_k(M_f)_{T-1}=n$, while the second says that no nonzero power $\ell^{T-1}$ belongs to $(J_f)_{T-1}$. Thus
the definition combines a numerical condition on the Milnor algebra with a
geometric avoidance condition for a linear subspace in $\PP(R_{T-1})$.

This condition is motivated by the interaction between Veronese varieties,
Macaulay inverse systems, Lefschetz properties, and associated forms. In
\cite{Alex-Giovanna-Abbas}, the avoidance of a Veronese variety by a linear
subspace is used to prove a Strong Lefschetz Property in degree $1$ for
zero-dimensional complete intersections. On the other hand, in the work of
Eastwood--Isaev and Alper--Isaev on associated forms and hypersurface
singularities, the same condition appears in a different language; see
\cite{Alper-Isaev1, Alper-Isaev2, Eastwood-Isaev}. If $f$ is smooth, then $J_f$ is a complete
intersection, $M_f$ is an Artinian Gorenstein algebra of socle degree $T$, and
the Macaulay inverse system of $M_f$ is the associated form of $f$. In this
setting, Alper--Isaev show that the associated form is non-degenerate precisely
when
\[
(J_f)_{T-1}\cap \{L^{T-1}:L\in R_1\}=0.
\]
In projective terms, this is exactly the Veronese-Avoiding condition. Thus the
smooth case connects our definition with classical invariant theory, Artinian
Gorenstein algebras, and the reconstruction problem for isolated hypersurface
singularities.

We first revisit the smooth case from the point of view of Macaulay inverse
systems. If $f$ defines a smooth hypersurface and $F$ is the Macaulay inverse
system of $M_f$, then Theorem~\ref{nonsingularAssform} shows that $V(f)$ is
Veronese-Avoiding if and only if $V(F)$ is smooth. This gives a direct
inverse-system interpretation of the Alper--Isaev criterion and provides a
practical way to test the Veronese-Avoiding condition in the complete
intersection case.

The main part of the paper concerns singular hypersurfaces. For hypersurfaces
with isolated singularities, the condition
$\codim \PP((J_f)_{T-1})=n$ is related to the defect of the singular subscheme
and to the coincidence threshold of the Milnor algebra; see
Lemma~\ref{lem:HFMilnor}. This allows us to connect the Veronese-Avoiding
property with the geometry of the singular locus. The central result is
Theorem~\ref{thm:main-singular}: if $X$ has exactly $n$ isolated singular
points, then $X$ is Veronese-Avoiding if and only if these points are ordinary
nodes in general linear position.

The proof reflects the two parts of the definition. If $X$ is
Veronese-Avoiding, the codimension condition forces the total Tjurina number to
be $n$. Since there are exactly $n$ singular points, each local Tjurina number
is equal to one, and hence each singularity is an ordinary node. The same
numerical condition also forces the points to impose independent linear
conditions on $R_1$, which is equivalent to general linear position. Conversely,
if the singular locus consists of $n$ nodes in general linear position, then the
self-duality of the Jacobian module shows that $(J_f)_{T-1}$ coincides with the
degree-$(T-1)$ part of the ideal of these points; in this form the avoidance of
the Veronese variety is immediate.

This classification also explains a construction used by Alper--Isaev in the
proof of \cite[Proposition~4.3]{Alper-Isaev1}. They use the auxiliary form
\[
f_0=
\begin{cases}
\displaystyle \sum_{1\leq i<j<k\leq n}x_ix_jx_k, & d=3,\\[6pt]
\displaystyle \sum_{1\leq i<j\leq n}
\left(x_i^{d-2}x_j^2+x_i^2x_j^{d-2}\right), & d\geq 4.
\end{cases}
\]
They prove that $(J_{f_0})_{T-1}$ has codimension $n$ and avoids the cone of
powers $\{L^{T-1}:L\in R_1\}$. In our terminology, this says that $f_0$ is
Veronese-Avoiding. The hypersurface $V(f_0)$ has precisely the coordinate
points as ordinary nodes, and these points are in general linear position.
Thus Theorem~\ref{thm:main-singular} gives a conceptual explanation of the
Alper--Isaev construction: their auxiliary form lies on the nodal boundary of
the smooth Veronese-Avoiding locus.

We also treat two related singular situations. First, for reduced singular plane
cubics, Proposition~\ref{prop:cubic-plane-classification} gives a complete
classification: such a cubic is Veronese-Avoiding if and only if it is nodal.
This includes the irreducible nodal cubic, the union of a smooth conic and a
transverse line, and the union of three distinct lines in general position.

Second, we consider what happens when the singular locus has fewer than $n$
points. Here a new phenomenon appears, emphasized in
Remark~\ref{rem:global-phenomenon}: the Veronese-Avoiding property is no longer
determined by the number of singular points or by their local analytic type.
For instance, the plane curves
\[
f_d=xyz^{d-2}+x^d+y^d
\quad\text{and}\quad
g_d=xyz^{d-2}+x^d+y^d+x^{d-1}z,
\qquad d\ge 4,
\]
both have a unique singular point at $[0:0:1]$, and in both cases this
singularity is a node. Nevertheless, $V(f_d)$ is not Veronese-Avoiding, whereas
$V(g_d)$ is Veronese-Avoiding. Thus, outside the range covered by
Theorem~\ref{thm:main-singular}, the Veronese-Avoiding condition is a genuinely
global property of the Jacobian ideal, not a condition depending only on the
local singularity type.

This phenomenon is explained by Theorem~\ref{thm:nodal-r-points}. If
$\Sing(X)=\Gamma$ consists of $r<n$ nodes imposing independent linear conditions
on $R_1$, then the gradient-generic condition
holds automatically, and the essential issue is the Veronese-avoidance condition. Theorem~\ref{thm:nodal-r-points} shows that this avoidance condition is
controlled by the base locus of the rational map
\[
\phi_f:\PP(I(\Gamma)_1)\dashrightarrow \PP(N(f)_{T-1}),\qquad
[\ell]\longmapsto [\ell^{T-1}\bmod (J_f)_{T-1}],
\]
where $N(f)=J_f^{\mathrm{sat}}/J_f$ is the Jacobian module. Since both vector
spaces have dimension $n-r$, this may be viewed as a rational map
\[
\phi_f:\PP^{\,n-r-1}\dashrightarrow \PP^{\,n-r-1}.
\]
Thus, under these hypotheses, $X$ is Veronese-Avoiding if and only if this map
has no base points.

We next study the parameter space of Veronese-Avoiding hypersurfaces. Let
\[
\mathscr V_{n,d}=
\{\, [f]\in \PP(R_d)\mid V(f)\subseteq \PP_k^{n-1}
\text{ is Veronese-Avoiding}\,\}.
\]
Using the map
\[
\mu_f:R_{T-d}^{\oplus n}\to R_{T-1},\qquad
(a_1,\ldots,a_n)\mapsto \sum_{i=1}^n a_i\frac{\partial f}{\partial x_i},
\]
we realize the codimension condition as a constant-rank condition, and the
avoidance condition as the inverse image of an open subset of a Grassmannian.
It follows from Corollary~\ref{cor:locally-closed-VAH} that
$\mathscr V_{n,d}$ is locally closed in $\PP(R_d)$. We then construct a
distinguished nodal locus: by Proposition~\ref{prop:nodal-locus-irred}, the
locus $\mathscr N_{n,d}$ of hypersurfaces with exactly $n$ nodes in general
linear position is nonempty, irreducible, constructible, and has dimension
\[
\binom{n+d-1}{d}-1-n.
\]
Theorem~\ref{thm:nodal-stratum-in-VAH} shows that
$\mathscr N_{n,d}\subseteq \mathscr V_{n,d}^{\mathrm{red}}$, and
Corollary~\ref{cor:smooth-VAH-nonempty} gives nonemptiness of the smooth
Veronese-Avoiding locus.

Finally, we prove a Lefschetz-type consequence of the definition. Although this
consequence follows from \cite[Theorem~1.5 and Remark~1.7]{Alex-Giovanna-Abbas},
we give a direct proof in the present setting. Theorem~\ref{thm:VAH-degree-one-Lefschetz}
shows that if $X$ is Veronese-Avoiding, then for a general linear form $\ell$
the multiplication map
\[
\ell^{T-2}:(M_f)_1\longrightarrow (M_f)_{T-1}
\]
is an isomorphism. Geometrically, the proof comes from projecting the Veronese
variety $V_n^{T-1}$ away from the linear space $\PP((J_f)_{T-1})$; the
avoidance condition guarantees that the projection is defined everywhere on the
Veronese variety. In the case of nodal singular plane cubics, this specializes
to the ordinary weak Lefschetz property in degree $2$.

\subsection*{Acknowledgments}
The second author was partially supported by the Iran National Science Foundation (INSF) under project No.~4046270.
The third author was partially supported by CNPq grant no. 302774/2025-4, FAEPEX grant no. 3485/25, and FAPESP grant no. 2024/00923-6.

\section{Veronese-avoiding  hypersurfaces}\label{sectionVAH}
Let $R=k[x_1,\ldots,x_n]$ be a standard graded polynomial ring over an algebraically closed field $k$ of characteristic zero, and let $f\in R_d$ be a reduced homogeneous polynomial of degree $d\geq 3$. We denote by $X=V(f)\subseteq \PP_k^{n-1}$ the corresponding reduced hypersurface. The gradient ideal of $f$ is
\[
J_f=\left(\frac{\partial f}{\partial x_1},\ldots,\frac{\partial f}{\partial x_n}\right),
\]
and the quotient $M_f:=R/J_f$ is called the Milnor algebra of $f$.

For an integer $m\geq 1$, consider the degree-$m$ Veronese embedding
\[
v_n^m:\PP(R_1)\longrightarrow \PP(R_m),\qquad [\ell]\longmapsto [\ell^m].
\]
Its image will be denoted by
\[
V_n^m:=v_n^m(\PP(R_1))\subseteq \PP(R_m).
\]
For $m\geq 2$, if one writes homogeneous coordinates on $\PP(R_m)$ indexed by the monomials of degree $m$, then $V_n^m$ is described by the rank-one condition on the first catalecticant matrix $\mathbf M_{n,m}$. Equivalently, its defining ideal is generated by the $2\times 2$ minors of $\mathbf M_{n,m}$; see \cite[Theorem 4]{Barshay}.

Set $T=n(d-2)$. Since $(J_f)_{T-1}\subseteq R_{T-1}$ is a $k$-linear subspace, its projectivization
\[
\PP((J_f)_{T-1})\subseteq \PP(R_{T-1})
\]
is a projective linear subspace whenever $(J_f)_{T-1}\neq 0$. Moreover,
\[
\codim_{\PP(R_{T-1})}\PP((J_f)_{T-1})
=\dim_k R_{T-1}-\dim_k (J_f)_{T-1}
=\dim_k(M_f)_{T-1}.
\]

\begin{Definition}\label{VAH}
Let $X=V(f)\subseteq \PP_k^{n-1}$ be a reduced hypersurface of degree $d\geq 3$. We say that $X$ is \emph{Veronese-Avoiding} if the following two conditions hold:
\begin{enumerate}
\item[{\rm (I)}] {\rm (Gradient-generic condition)}
\[
\codim_{\PP(R_{T-1})}\PP((J_f)_{T-1})=n.
\]

\item[{\rm (II)}] {\rm (Veronese-avoidance condition)}
\[
\PP((J_f)_{T-1})\bigcap V_n^{T-1}=\emptyset.
\]
\end{enumerate}
\end{Definition}
Before turning to the general theory, we record two elementary examples: the first shows that the gradient-generic condition alone does not imply the Veronese-Avoiding property, while the second provides a singular Veronese-Avoiding curve.

\begin{Example}\label{ex:Fermat}
Let $f=x_1^d+\cdots+x_n^d$. Then $J_f=(x_1^{d-1},\ldots,x_n^{d-1})$, and
\[
M_f=R/J_f \simeq k[x_1,\ldots,x_n]/(x_1^{d-1},\ldots,x_n^{d-1})
\]
is a monomial complete intersection with socle degree $T=n(d-2)$. The monomials of degree $T-1$ surviving in $M_f$ are precisely
\[
x_1^{d-2}\cdots x_i^{d-3}\cdots x_n^{d-2},\qquad i=1,\ldots,n.
\]
Hence $\dim_k(M_f)_{T-1}=n$, so condition {\rm (I)} holds. On the other hand, since $T-1\geq d-1$, we have
\[
x_1^{T-1}=x_1^{T-d}x_1^{d-1}\in (J_f)_{T-1}.
\]
Thus $[x_1^{T-1}]\in \PP((J_f)_{T-1})\bigcap V_n^{T-1}$, and condition {\rm (II)} fails. Therefore the Fermat hypersurface satisfies condition {\rm (I)} but is not Veronese-Avoiding.
\end{Example}
\begin{Proposition}\label{prop:family-VAH-iff-cubic}
For each integer $d\geq 3$, let $f_d=xyz^{d-2}+x^d+y^d\in R=k[x,y,z]$. Then $X_d=V(f_d)\subseteq \PP_k^2$ is Veronese-Avoiding if and only if $d=3$.
\end{Proposition}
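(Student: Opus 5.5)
Here $n=3$, so $T=3(d-2)$ and $T-1=3d-7$. The partials are
\[
f_x=yz^{d-2}+dx^{d-1},\qquad f_y=xz^{d-2}+dy^{d-1},\qquad f_z=(d-2)xyz^{d-3}.
\]
The plan is to treat $d=3$ and $d\ge 4$ separately.

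\emph{Case $d=3$.} Here $T-1=2$, and $(J_f)_2$ is spanned by $yz+3x^2$, $xz+3y^2$, $xy$. These are linearly independent, so $\dim_k(M_f)_2=6-3=3=n$ and condition (I) holds. For (II), I would test whether some nonzero $\ell=ax+by+cz$ has
\[
\ell^2=\alpha(yz+3x^2)+\beta(xz+3y^2)+\gamma xy .
\]
Compare coefficients. The $z^2$ coefficient gives $c=0$. Then the $yz$ coefficient gives $\alpha=2bc=0$, and the $xz$ coefficient gives $\beta=2ac=0$. So $\ell^2=\gamma xy$, which forces $a^2=b^2=0$ and hence $\ell=0$. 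Therefore (II) holds. Alternatively, $V(f_3)$ is an irreducible nodal cubic, and this case could be read off from Proposition~\ref{prop:cubic-plane-classification}, but the direct check is shorter and safer to cite.

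\emph{Case $d\ge4$.} The aim is to show that (II) fails by exhibiting $z^{T-1}\in J_f$. Since $d\ge 4$, the partial $f_z$ is a nonzero multiple of $xyz^{d-3}$, so $xyz^{d-3}\in J_f$. Multiplying $f_x$ by $z^{d-2}$ and reducing by $xyz^{d-3}$ (which kills $x^{d-1}z^{d-2}$, because $d-1\ge1$) gives
\[
yz^{2d-4}\equiv -d\,x^{d-1}z^{d-2}\equiv 0 \pmod{J_f}.
\]
Symmetrically, $xz^{2d-4}\in J_f$. Multiplying $f_x$ by $xz^{d-2}$, the term $x^{d}z^{d-2}$ dies (it is divisible by $xyz^{d-3}$? no, it has no $y$), so this product does not immediately help. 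Instead I use $x^{d-1}\equiv -\tfrac1d\,yz^{d-2}$ and $y^{d-1}\equiv -\tfrac1d\,xz^{d-2}$ to push every monomial containing $x$ or $y$ toward higher powers of $z$. This shows that $x z^{m}$ and $y z^{m}$ lie in $J_f$ for $m\ge 2d-4$.

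The key remaining step is to decide whether $z^{T-1}=z^{3d-7}$ itself lies in $J_f$. Here I would argue via the Hilbert function instead. The ideal $J_f$ has the initial-type generators $xyz^{d-3}$, $x^{d-1}$, $y^{d-1}$ modulo the $z$-terms, and $z^{T-1}$ is not obviously in $J_f$. So the likelier failure is either of condition (I) or of Veronese avoidance for a different $\ell$.

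\emph{Fallback via (I).} The singular locus is only the point $[0:0:1]$, a node, so the total Tjurina number is $1$ and $J_f$ is not a complete intersection. By Lemma~\ref{lem:HFMilnor}, $\dim_k(M_f)_{T-1}$ equals $1$ plus a defect term. I would compute $\dim_k(M_f)_{T-1}$ by a Gröbner basis with $z$ last, expecting a value of $1$, not $3$. That would contradict (I) and finish the case.

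I expect this $d\ge4$ computation to be the main obstacle: pinning down the Hilbert function of $M_f$ in degree $3d-7$. I would first try to show that $J_f$ agrees with $I([0:0:1])=(x,y)$ in degree $T-1$, using the monomial reductions above. If that holds, then $\dim_k(M_f)_{T-1}=1\ne 3$, so (I) fails; and in any case $(x,y)_{T-1}$ contains $x^{T-1}$, so (II) fails as well.
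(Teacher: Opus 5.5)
Your case $d=3$ is correct, and the direct coefficient comparison is simpler than the paper's $2\times 2$-minor computation. You were also right not to invoke Proposition~\ref{prop:cubic-plane-classification}, whose Case~1 relies on this very proposition. The case $d\ge 4$, however, is not proved, and each route you sketch fails.

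\begin{itemize}
\item Aiming at $z^{T-1}$ cannot work. Every monomial of $\partial f_d/\partial x$, $\partial f_d/\partial y$, $\partial f_d/\partial z$ is divisible by $x$ or $y$, so $J_{f_d}\subseteq (x,y)$.
\item Your reduction step is wrong, since $x^{d-1}z^{d-2}$ is not divisible by $xyz^{d-3}$. In fact, already for $d=4$ one checks that $yz^{4}\equiv 16\,x^2y^3\not\equiv 0$ modulo $J_{f_4}$. So the claim that $xz^m,yz^m\in J_{f_d}$ for all $m\ge 2d-4$ is false.
\item The fallback via (I) cannot work, because (I) \emph{holds}. $\Sing(X_d)$ is a single node, so $\tau(X_d)=1$, $J_{f_d}^{\mathrm{sat}}=(x,y)$, and $\mathrm{def}_1(\Sing(X_d))=1-3+2=0$. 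Hence $\dim_k(M_{f_d})_{T-1}=3$ by Lemma~\ref{lem:HFMilnor}.
\item Equivalently, $N(f_d)_1=(x,y)_1$ is $2$-dimensional, so by self-duality $(J_{f_d})_{T-1}$ has codimension $2$ in $(x,y)_{T-1}$. The equality $(J_{f_d})_{T-1}=(x,y)_{T-1}$ on which your last sentence rests is therefore false, and it cannot show that any power $\ell^{T-1}$ lies in $J_{f_d}$.
\end{itemize}

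The missing idea is to produce such a power directly. Since $J_{f_d}\subseteq (x,y)$ and $(x,y)$ is prime, any $\ell$ with $\ell^{T-1}\in J_{f_d}$ must lie in $(x,y)_1$. So the natural candidates are $x$ and $y$, never $z$. The paper uses the identity
\[
(d-2)\,y\,\frac{\partial f_d}{\partial y}-z\,\frac{\partial f_d}{\partial z}=d(d-2)\,y^{d},
\]
in which the mixed terms $(d-2)xyz^{d-2}$ cancel. Hence $y^d\in J_{f_d}$. Because $T-1=3d-7\ge d$ exactly when $d\ge 4$, one gets
\[
y^{T-1}=y^{2d-7}\,y^{d}\in (J_{f_d})_{T-1},
\]
so (II) fails; symmetrically $x^{T-1}\in (J_{f_d})_{T-1}$. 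This is where the dichotomy $d=3$ versus $d\ge 4$ comes from. For $d=3$ the same identity still gives $y^3\in J_{f_3}$, but $T-1=2<3$, so it yields no power in degree $T-1$.
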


\begin{proof}
Assume first that $d=3$. Then $f_3=xyz+x^3+y^3$ and
\[
J_{f_3}=(3x^2+yz,\;3y^2+xz,\;xy).
\]
Since $T-1=3(d-2)-1=2$, one has
\[
(J_{f_3})_2=\langle 3x^2+yz,\;3y^2+xz,\;xy\rangle_k.
\]
With respect to the ordered basis $x^2,\;xy,\;xz,\;y^2,\;yz,\;z^2$
of $R_2$, the corresponding coefficient matrix is
\[
N=
\begin{bmatrix}
3&0&0&0&1&0\\
0&0&1&3&0&0\\
0&1&0&0&0&0
\end{bmatrix}.
\]
Hence the linear space $L:=\PP((J_{f_3})_2)\subseteq \PP(R_2)=\PP_k^5$
is defined by
\[
z_1-3z_5=0,\qquad z_4-3z_3=0,\qquad z_6=0.
\]
Therefore $\codim L=3$, so condition {\rm (I)} holds.

Now $V_3^2\subseteq \PP_k^5$ is defined by the $2\times 2$ minors of the symmetric matrix 
\[
M=
\begin{bmatrix}
z_1&z_2&z_3\\
z_2&z_4&z_5\\
z_3&z_5&z_6
\end{bmatrix}.
\]
Restricting to $L$, we obtain
\[
\widetilde M=
\begin{bmatrix}
3z_5&z_2&z_3\\
z_2&3z_3&z_5\\
z_3&z_5&0
\end{bmatrix}.
\]
A direct computation shows that the $2\times 2$ minors of $\widetilde M$ generate $(z_2,z_3,z_5)^2$. 
Hence the ideal generated by the equations of $L$ together with the defining equations of $V_3^2$ is $\fm$-primary, where $\fm=(z_1,\ldots,z_6)$. Thus $L\bigcap V_3^2=\emptyset$ and condition {\rm (II)} also holds. Therefore $X_3$ is Veronese-Avoiding.

Assume now that $d\geq 4$. Then
\[
\frac{\partial f_d}{\partial x}=yz^{d-2}+dx^{d-1},\qquad
\frac{\partial f_d}{\partial y}=xz^{d-2}+dy^{d-1},\qquad
\frac{\partial f_d}{\partial z}=(d-2)xyz^{d-3}.
\]
A direct computation gives
\[
(d-2)y\frac{\partial f_d}{\partial y}-z\frac{\partial f_d}{\partial z}
=d(d-2)y^d,
\]
hence $y^d\in J_{f_d}$. Since $T-1=3d-7\ge d$ it follows that
$
y^{T-1}=y^{T-1-d}\cdot y^d\in (J_{f_d})_{T-1}.
$
Therefore
\[
[y^{T-1}]\in \PP((J_{f_d})_{T-1})\bigcap V_3^{T-1},
\]
so condition {\rm (II)} fails. Hence $X_d$ is not Veronese-Avoiding for $d\ge 4$.
\end{proof}
We now turn to the study of Veronese-Avoiding hypersurfaces in the two basic settings relevant to this paper. We first consider the smooth case, where the condition can be interpreted in terms of the Macaulay inverse system of the Milnor algebra. We then study the singular case, with emphasis on hypersurfaces having isolated singularities, and show that the Veronese-Avoiding property is closely related to the geometry of the singular locus.


\subsection{Smooth hypersurfaces}
Let $f \in R_d$ be a homogeneous polynomial of degree $d\geq 3$ defining a smooth hypersurface $X=V(f)\subseteq \PP_k^{n-1}.$
Then the gradient ideal $J_f$ is generated by a regular sequence of degree $d-1$. Hence the Milnor algebra $M_f:=R/J_f$
is a standard graded Artinian Gorenstein $k$-algebra, in fact a complete intersection, with socle degree $T:=n(d-2).$
Moreover, the socle $\Soc(M_f)=(M_f)_T$ is one-dimensional, and is generated by the class of the Hessian polynomial $\mathrm{Hess}(f)$; see, for instance, \cite[Lemma 3.3]{Saito}.

Since $M_f$ is a complete intersection of type $(d-1,\ldots,d-1)$, its Hilbert series is
\[
\mathrm{HS}_{M_f}(t)=\left(1+t+\cdots+t^{d-2}\right)^n.
\]
In particular, $\dim_k(M_f)_i=\dim_k(M_f)_{T-i}$ for all $0\leq i\leq T$,
and $\dim_k(M_f)_i=0 $ for all $i>T$.  
Since $\dim_k(M_f)_1=n$, duality yields
\begin{equation}\label{eq:codim-smooth}
\dim_k(M_f)_{T-1}=n.
\end{equation}
Equivalently,
\[
\codim_{\PP(R_{T-1})}(\PP((J_f)_{T-1})=n.
\]
Thus every smooth hypersurface satisfies condition {\rm (I)} of Definition~\ref{VAH}.

We now recall the language of Macaulay inverse systems. Let
\[
S=k[y_1,\ldots,y_n]
\]
be the ring of differential duals. The ring $R$ acts on $S$ by differentiation:
\[
(h\circ F)(y_1,\ldots,y_n)
:=
h\!\left(\frac{\partial}{\partial y_1},\ldots,\frac{\partial}{\partial y_n}\right)F(y_1,\ldots,y_n),
\qquad h\in R,\;F\in S.
\]
For each integer $e\geq 0$, this action induces a perfect pairing
\[
R_e\times S_e\longrightarrow k,
\qquad
(h,F)\longmapsto h\circ F,
\]
called the \emph{apolar pairing}.
Given a homogeneous polynomial $F\in S_e$, its apolar ideal is defined by
\[
I_F:=\Ann_R(F)=\{\,h\in R\mid h\circ F=0\,\}.
\]
Then $R/I_F$ is a standard graded Artinian Gorenstein $k$-algebra with socle degree $e$. Conversely, the following special case of Macaulay duality is classical; see, for example, \cite[Theorem 21.6]{EisenbudBook}.
\begin{Theorem}\label{thm:MIS}
For each $e\geq 0$, the correspondence
\[
[F]\longmapsto \Ann_R(F)
\]
gives a bijection between $\PP(S_e)$ and the set of homogeneous ideals $I\subseteq R$ such that $R/I$ is a standard graded Artinian Gorenstein algebra with socle degree $e$.
\end{Theorem}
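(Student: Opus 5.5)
The statement to prove is Theorem~\ref{thm:MIS}. My plan is to build the inverse map explicitly and check that the two maps are mutually inverse, working one graded degree at a time through the apolar pairing.

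\emph{Well-definedness.} Given $F\in S_e$ nonzero, $\Ann_R(F)$ depends only on $[F]$, since scaling $F$ does not change its annihilator, and $I_F$ is homogeneous because $F$ is. For $i>e$ we have $R_i\circ F=0$, so $R/I_F$ is Artinian of top degree at most $e$. Since $F\neq0$, perfectness of the pairing in degree $e$ gives some $h\in R_e$ with $h\circ F\neq 0$; hence $(R/I_F)_e\neq0$.

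\emph{Gorenstein property.} Let $\bar g\in(R/I_F)_i$ be a nonzero socle element, so $g\circ F\neq 0$ while $x_j g\in I_F$ for all $j$. Then $\partial_{y_j}(g\circ F)=0$ for every $j$, so $g\circ F$ is a constant. Since $g\circ F$ has degree $e-i$, this forces $i=e$. Thus the socle is concentrated in $(R/I_F)_e$. That piece is one-dimensional: $(I_F)_e$ is the kernel of the nonzero functional $h\mapsto h\circ F$ on $R_e$, so it has codimension one. Hence $R/I_F$ is Artinian Gorenstein of socle degree $e$.

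\emph{Inverse map.} Conversely, let $A=R/I$ be graded Artinian Gorenstein with socle degree $e$, so $\dim_k A_e=1$. The perfect pairing identifies $(R_e/I_e)^\vee$, which is one-dimensional, with a line $kF\subseteq S_e$, namely $F$ spans $I_e^\perp$. Send $I$ to $[F]$. I must show $I=\Ann_R(F)$, and this is where the Gorenstein hypothesis is essential. This step is the main obstacle.

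\emph{The inclusion $I\subseteq\Ann(F)$.} Take $g\in I_i$. For every $h\in R_{e-i}$ we have $hg\in I_e$, so $hg\circ F=0$, that is $h\circ(g\circ F)=0$. Perfectness of the pairing in degree $e-i$ then gives $g\circ F=0$.

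\emph{The reverse inclusion.} Take $g\in R_i\setminus I_i$. I want some $h\in R_{e-i}$ with $hg\notin I_e$. Because $A$ is Artinian with one-dimensional socle $A_e$, every nonzero element of $A$ has a nonzero multiple in the socle. To see this, multiply $\bar g$ repeatedly by suitable variables; the process must stop by Artinianness, and it stops only at a socle element, which necessarily lies in degree $e$. So $hg\notin I_e$ for some $h$, hence $h\circ(g\circ F)\ne0$, and therefore $g\notin\Ann(F)$.

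\emph{Bijectivity.} This gives $I=\Ann_R(F)$. Finally, $F$ is recovered from $\Ann_R(F)$ up to scalar as the line $(\Ann_R(F)_e)^\perp$, because $\Ann_R(F)_e=F^\perp$. So the two assignments are mutually inverse, and the correspondence is a bijection.
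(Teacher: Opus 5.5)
Your proof is correct. It takes a different route from the paper, which gives no proof of its own and simply cites \cite[Theorem~21.6]{EisenbudBook}. The standard argument behind that citation uses Matlis duality:
\begin{itemize}
\item the module of inverse polynomials is the injective hull $E$ of $k$;
\item $(0:_E I)\cong \Hom_R(R/I,E)$ is the canonical module of $R/I$;
\item $R/I$ is Gorenstein exactly when this module is cyclic, generated by some $F$, and then $I=\Ann_R(F)$.
\end{itemize}

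You instead work degree by degree with the perfect apolar pairing. For the forward direction, you use two facts: $(R/I_F)_e$ is one-dimensional, and a homogeneous socle element $g$ has $g\circ F$ killed by every $\partial/\partial y_j$. Together these give the Gorenstein property. For the converse, you take $F$ to be a generator of $I_e^{\perp}$. You then prove $I=\Ann_R(F)$ using only that every nonzero homogeneous element of $A$ can be pushed into the one-dimensional socle $A_e$ by multiplying by variables.

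Your version is self-contained, purely linear-algebraic, and is exactly what the paper needs. Its price is that it uses the grading and characteristic zero in an essential way. Characteristic zero enters twice: in the perfectness of the differentiation pairing, and in the fact that a homogeneous form of positive degree whose first partials all vanish is zero. In characteristic $p$ both fail (for example $y^p$), and one has to pass to contraction or divided powers. The Matlis-duality route works in any characteristic with the contraction action and also covers non-graded local Artinian Gorenstein quotients. It places the statement inside the general inclusion-reversing correspondence between finite-colength ideals and finitely generated submodules of the inverse system.

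Two steps you leave implicit are harmless. First, the socle is a graded subspace, so restricting to homogeneous socle elements loses nothing. Second, the implication $hg\notin I_e \Rightarrow (hg)\circ F\neq 0$ uses $I_e=(kF)^{\perp}$. This follows from $kF=I_e^{\perp}$ by the double-annihilator property of a perfect pairing.
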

Applied to the Milnor algebra $M_f$, Theorem~\ref{thm:MIS} shows that there exists a homogeneous polynomial
\[
F\in S_T,
\]
unique up to a nonzero scalar, such that
\[
J_f=\Ann_R(F).
\]
We call $F$ the \emph{Macaulay inverse system} of $M_f$.

The next theorem characterizes smooth Veronese-Avoiding hypersurfaces in terms of the Macaulay inverse system of their Milnor algebra. Although the result is due to Alper and Isaev \cite[Proposition 4.3]{Alper-Isaev1}, we include a new proof, based on Macaulay inverse systems, which fits naturally with the viewpoint developed in this paper.
\begin{Theorem}\label{nonsingularAssform}
Let $f \in R_d$ be a homogeneous polynomial of degree $d \geq 3$ defining a smooth hypersurface in $\PP_k^{n-1}$, and let $F\in S_T$ be the Macaulay inverse system of $M_f$, where $T=n(d-2)$. Then the following are equivalent:
\begin{enumerate}
\item[{\rm (a)}] The hypersurface $V(f)$ is Veronese-Avoiding.
\item[{\rm (b)}] The hypersurface $V(F)\subseteq \PP_k^{n-1}$ is smooth.
\end{enumerate}
\end{Theorem}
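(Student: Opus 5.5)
Since $V(f)$ is smooth, condition (I) holds automatically by \eqref{eq:codim-smooth}. So the plan is to show that condition (II) is equivalent to smoothness of $V(F)$, and to do this by translating membership $\ell^{T-1}\in J_f$ into a statement about derivatives of $F$ through the apolar pairing.

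\emph{Step 1: the basic identity.} For $\ell=a_1x_1+\cdots+a_nx_n\in R_1$ and $G\in S_e$, a direct computation from the definition of the action gives
\[
\ell^{e}\circ G = e!\,G(a_1,\ldots,a_n).
\]
It suffices to check this on monomials and expand $\ell^e$ multinomially; this is routine and uses only $\operatorname{char}k=0$.

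\emph{Step 2: rewrite condition (II) via the annihilator.} Since $J_f=\Ann_R(F)$, we have $\ell^{T-1}\in (J_f)_{T-1}$ if and only if $\ell^{T-1}\circ F=0$ in $S_1$. Every element of $S_1$ is determined by its pairings with the $x_i$, and
\[
x_i\circ(\ell^{T-1}\circ F)=\ell^{T-1}\circ \frac{\partial F}{\partial y_i}.
\]
Applying Step 1 with $e=T-1$ to each partial derivative, we get $\ell^{T-1}\in J_f$ if and only if
\[
\frac{\partial F}{\partial y_i}(a)=0\quad\text{for all } i.
\]
Thus $[\ell^{T-1}]\in\PP((J_f)_{T-1})\cap V_n^{T-1}$ exactly when $[a]$ is a common zero of the partials of $F$. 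By Euler's relation in characteristic zero, this means $[a]$ is a singular point of $V(F)$, provided $\deg F=T\ge1$. Hence (II) holds if and only if $V(F)$ is smooth, which gives (a)$\Leftrightarrow$(b).

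\emph{Main obstacle: degenerate degrees.} The only delicate point is bookkeeping in small degrees. We need $T\ge 2$ so that $T-1\ge1$ and $V(F)$ is a genuine hypersurface. This holds whenever $n\ge2$, since $d\ge3$ gives $T=n(d-2)\ge 2$. We should also make sure $F$ is not a power of a linear form, which would make $V(F)$ nonreduced and hence singular. This case is consistent with the argument: it corresponds to $\dim (M_f)_1=1$, which is excluded because $\dim (M_f)_1=n$.

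Everything else is a direct translation between $\ell^{T-1}\in J_f$ and the vanishing of the gradient of $F$ at the point $a$.
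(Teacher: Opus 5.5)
Your proof is correct. It reaches the result by a more streamlined route than the paper's.

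\textbf{The paper's route.} The paper works in coordinates. It moves the singular point of $V(F)$ (respectively the linear form $\ell$) to $[0:\cdots:0:1]$ (respectively $x_n$). It then reads the vanishing of the coefficients of $y_n^T$ and $y_iy_n^{T-1}$ as membership of $x_n^T$ and $x_ix_n^{T-1}$ in $\Ann_R(F)=J_f$. For (a)$\Rightarrow$(b) it also uses that $\Soc(M_f)$ is concentrated in degree $T$, in order to pass from $x_n^T, x_ix_n^{T-1}\in J_f$ back to $x_n^{T-1}\in J_f$.

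\textbf{Your route.} You use the evaluation identity $\ell^e\circ G=e!\,G(a)$, together with the fact that an element of $S_1$ is zero once all its $x_i$-derivatives vanish. The second fact is exactly the inverse-system form of the paper's socle step, since both say that $\ell^{T-1}\circ F\in S_1$ cannot be annihilated by all of $R_1$ unless it is zero. The first fact replaces the change of coordinates.

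\textbf{Comparison.} Your version is coordinate-free and handles both implications at once. It also shows slightly more: $[a]\mapsto[\ell_a^{T-1}]$, with $\ell_a=\sum a_ix_i$, gives a set-theoretic bijection between $\Sing V(F)$ and $\PP((J_f)_{T-1})\cap V_n^{T-1}$. The paper's version is more hands-on and makes the role of the Gorenstein socle explicit.

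Your closing remark about $F$ possibly being a power of a linear form is harmless but unnecessary. If $F=L^T$, the gradient of $F$ vanishes along $V(L)$, and your equivalence already covers that case.
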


\begin{proof}
Since $V(f)$ is smooth, condition {\rm (I)} in Definition~\ref{VAH} holds by \eqref{eq:codim-smooth}. Thus $V(f)$ is Veronese-Avoiding if and only if condition {\rm (II)} holds.

Assume first that $V(f)$ is Veronese-Avoiding. Suppose, by contradiction, that $V(F)$ is singular. After a linear change of coordinates, we may assume that $p=[0:\cdots:0:1]\in \PP_k^{n-1}$ is a singular point of $V(F)$. Then the monomials
\[
y_n^T,\  y_1y_n^{T-1},\  \ldots,\  y_{n-1}y_n^{T-1}
\]
do not occur in $F$. Equivalently,
\[
x_n^T\circ F=0,
\qquad
x_i x_n^{T-1}\circ F=0
\quad\text{for }i=1,\ldots,n-1.
\]
Since $J_f=\Ann_R(F)$, it follows that
\[
x_n^T,\;x_i x_n^{T-1}\in J_f
\qquad (i=1,\ldots,n-1).
\]
Passing to the quotient $M_f=R/J_f$, we obtain
\[
\overline{x_n}^{\,T}=0,
\qquad
\overline{x_i}\,\overline{x_n}^{\,T-1}=0
\quad\text{for }i=1,\ldots,n-1.
\]
Hence $\overline{x_n}^{\,T-1}$ is annihilated by all linear forms, and therefore $\overline{x_n}^{\,T-1}\in \Soc(M_f)$. 
But $\Soc(M_f)$ is concentrated in degree $T$, whereas $\overline{x_n}^{\,T-1}$ has degree $T-1$. Thus $\overline{x_n}^{\,T-1}=0$. Therefore $x_n^{T-1}\in (J_f)_{T-1}$ so
\[
[x_n^{T-1}]\in \PP((J_f)_{T-1})\bigcap V_n^{T-1},
\]
contradicting condition {\rm (II)}. Hence $V(F)$ is smooth.

Conversely, assume that $V(F)$ is smooth. Suppose, by contradiction, that $V(f)$ is not Veronese-Avoiding. Since condition {\rm (I)} already holds, condition {\rm (II)} must fail. Hence there exists a nonzero linear form $\ell\in R_1$ such that $\ell^{T-1}\in (J_f)_{T-1}$. After a projective change of coordinates, we may assume that $\ell=x_n$. Then
\[
x_n^{T-1}\in J_f=\Ann_R(F).
\]
Since $\Ann_R(F)$ is an ideal, it also contains
\[
x_n^T,\  x_i x_n^{T-1}\quad (i=1,\ldots,n-1).
\]
Therefore
\[
x_n^T\circ F=0,
\qquad
x_i x_n^{T-1}\circ F=0
\quad\text{for }i=1,\ldots,n-1.
\]
As $F$ has degree $T$, this implies that the coefficients of the monomials
\[
y_n^T,\ y_1y_n^{T-1},\ \ldots,\ y_{n-1}y_n^{T-1}
\]
all vanish. Hence $[0:\cdots:0:1]$ is a singular point of $V(F)$, contradicting the smoothness of $V(F)$. Therefore $V(f)$ is Veronese-Avoiding.
\end{proof}

\begin{Remark}
Theorem~\ref{nonsingularAssform} gives a practical criterion for detecting the Veronese-Avoiding property in the smooth case. Once $f$ is smooth, condition {\rm (I)} is automatic, and condition {\rm (II)} is equivalent to the smoothness of the Macaulay inverse system of $M_f$. Thus one may test whether $V(f)$ is Veronese-Avoiding by studying the hypersurface defined by the inverse system, rather than by computing the intersection
$\PP((J_f)_{T-1})\bigcap V_n^{T-1}$. 
\end{Remark}
\begin{Example}[Smooth cubic curves in Hesse form]\label{ex:Hesse}
	Let
	\[
	f_\lambda=x_1^3+x_2^3+x_3^3-3\lambda x_1x_2x_3,
	\qquad \lambda^3\neq 1,
	\]
	so that $X_\lambda=V(f_\lambda)\subseteq \PP_k^2$ is a smooth cubic curve. Its gradient ideal is
	\[
	J_{f_\lambda}=
	(x_1^2-\lambda x_2x_3,\;
	x_2^2-\lambda x_1x_3,\;
	x_3^2-\lambda x_1x_2).
	\]
	Since the socle degree of $M_{f_\lambda}=R/J_{f_\lambda}$ is $3$, a Macaulay inverse system of $M_{f_\lambda}$ is, up to a nonzero scalar,
	\[
	F_\lambda=
	\lambda(y_1^3+y_2^3+y_3^3)+6y_1y_2y_3.
	\]
	Indeed,
	\[
	\left(x_1^2-\lambda x_2x_3\right)\circ F_\lambda
	=
	6\lambda y_1-\lambda\cdot 6y_1
	=
	0,
	\]
	and similarly for the other two generators.
	
	If $\lambda=0$, then $F_0=6y_1y_2y_3$, 	which is singular. If $\lambda\neq 0$, then, up to a nonzero scalar,
	\[
	F_\lambda
	=
	y_1^3+y_2^3+y_3^3-3\mu y_1y_2y_3,
	\qquad
	\mu=-\frac{2}{\lambda}.
	\]
	Thus $V(F_\lambda)$ is again a cubic in Hesse form, and hence it is smooth if and only if $	\mu^3\neq 1$, equivalently $\lambda^3\neq -8$.  
	Therefore $V(F_\lambda)$ is smooth if and only if $\lambda\neq 0$ and $	\lambda^3\neq -8$. 	Hence, by Theorem~\ref{nonsingularAssform}, the smooth cubic curve $X_\lambda$ is Veronese-Avoiding if and only if
	\[
	\lambda\neq 0
	\quad\text{and}\quad
	\lambda^3\neq -8.
	\]
\end{Example}
\begin{Example}[A symmetric quartic example]\label{ex:symmetric-quartic}
	Let $X=V(f)\subseteq \PP_k^2$, where 
	$$f=x_1^4+x_2^4+x_3^4+4x_1x_2x_3(x_1+x_2+x_3).
	$$
	Set
	\[
	e_1=x_1+x_2+x_3,\qquad e_2=x_1x_2+x_1x_3+x_2x_3,\qquad e_3=x_1x_2x_3.
	\]
We get
	\[
	x_1^4+x_2^4+x_3^4=e_1^4-4e_1^2e_2+2e_2^2+4e_1e_3,
	\]
	hence
	\[
	f=G(e_1,e_2,e_3),\qquad
	G(u,v,w)=u^4-4u^2v+2v^2+8uw.
	\]
	
	We first show that $X$ is smooth. If $p=[x_1:x_2:x_3]\in X$ is a singular point with pairwise distinct coordinates, then
	\[
	\begin{pmatrix}
		1 & x_2+x_3 & x_2x_3\\
		1 & x_1+x_3 & x_1x_3\\
		1 & x_1+x_2 & x_1x_2
	\end{pmatrix}
	\]
	is the Jacobian matrix of the change of variables $(x_1,x_2,x_3)\mapsto (e_1,e_2,e_3)$, and its determinant is
	\[
	(x_1-x_2)(x_1-x_3)(x_2-x_3)\neq 0.
	\]
	Hence, by the chain rule, the vanishing of the partial derivatives of $f$ at $p$ implies
	\[
	G_u(p)=G_v(p)=G_w(p)=0.
	\]
	Since
	\[
	G_u=4u^3-8uv+8w,\qquad G_v=-4u^2+4v,\qquad G_w=8u,
	\]
	these vanish simultaneously only at $u=v=w=0$, impossible in projective space. Therefore a singular point must have two equal coordinates. By symmetry, we may write it as $[a:a:b]$. Then
	\[
	\frac{\partial f}{\partial x_1}(a,a,b)=4a(a^2+3ab+b^2),\qquad
	\frac{\partial f}{\partial x_3}(a,a,b)=4(2a^3+2a^2b+b^3).
	\]
	If $a=0$, then $\partial f/\partial x_3=4b^3\neq 0$, so $a\neq 0$. After scaling $a=1$ and setting $t=b/a$, a singular point would have to satisfy
	\[
	t^2+3t+1=0,\qquad t^3+2t+2=0.
	\]
	Since
	\[
	\operatorname{Res}_t(t^2+3t+1,\;t^3+2t+2)=-25\neq 0,
	\]
	where $\operatorname{Res}_t(-,-)$ denotes the resultant with respect to $t$, these two equations have no common root. Thus $X$ is smooth.
	
	Now let
	\[
	s_1=y_1+y_2+y_3,\qquad
	s_2=y_1y_2+y_1y_3+y_2y_3,\qquad
	s_3=y_1y_2y_3.
	\]
	A direct apolar computation shows that a Macaulay inverse system of the Milnor algebra $M_f$ is
	\[
	F=
	6s_1^6-30s_1^4s_2-180s_1^3s_3+105s_1^2s_2^2+510s_1s_2s_3-190s_2^3-165s_3^2.
	\]
	
	We next show that $V(F)\subseteq \PP_k^2$ is smooth. Write $F=H(s_1,s_2,s_3)$, where
	\[
	H=
	6s_1^6-30s_1^4s_2-180s_1^3s_3+105s_1^2s_2^2+510s_1s_2s_3-190s_2^3-165s_3^2.
	\]
	If $q=[y_1:y_2:y_3]$ is a singular point with pairwise distinct coordinates, then the same Jacobian argument shows that
	\[
	H_{s_1}(q)=H_{s_2}(q)=H_{s_3}(q)=0.
	\]
	A Gr\"obner basis computation for the ideal $(H_{s_1},H_{s_2},H_{s_3})$ shows that it contains
	\[
	s_3^3,\ \  100586\,s_2^3-404547\,s_3^2,\ \ 6s_1^3-17s_1s_2+11s_3.
	\]
	Hence any common zero of $H_{s_1},H_{s_2},H_{s_3}$ satisfies $s_3=s_2=s_1=0$, impossible in projective space. Therefore a singular point of $V(F)$ must have two equal coordinates. By symmetry, write it as $[a:a:b]$. Then
	\[
	\frac{\partial F}{\partial y_1}(a,a,b)
	=
	6\bigl(67a^5-95a^4b+70a^3b^2-50a^2b^3+15ab^4+b^5\bigr),
	\]
	and
	\[
	\frac{\partial F}{\partial y_3}(a,a,b)
	=
	-12\bigl(19a^5-35a^4b+50a^3b^2-30a^2b^3-5ab^4-3b^5\bigr).
	\]
	If $a=0$, then $\partial F/\partial y_1=6b^5\neq 0$, so $a\neq 0$. After scaling $a=1$ and setting $t=b/a$, a singular point would have to satisfy
	\[
	P(t)=t^5+15t^4-50t^3+70t^2-95t+67=0,
	\]
	\[
	Q(t)=3t^5+5t^4+30t^3-50t^2+35t-19=0.
	\]
	Since
	\[
	\operatorname{Res}_t(P,Q)=-112990236800000\neq 0,
	\]
	these equations have no common root. Therefore $V(F)$ is smooth. By Theorem~\ref{nonsingularAssform}, the quartic curve $X$ is Veronese-Avoiding.
\end{Example}

\begin{Remark}
	The same method applies to the higher members of the family
	\[
	x_1^d+x_2^d+x_3^d+d\,x_1x_2x_3(x_1+x_2+x_3)^{d-3},
	\qquad d\ge 5.
	\]
	In each case, symmetry forces the Macaulay inverse system to be a weighted-homogeneous polynomial in the elementary symmetric functions $s_1,s_2,s_3$, and smoothness can be tested by the same two-step argument: first one excludes singular points with pairwise distinct coordinates by passing to the symmetric variables, and then one treats the case of two equal coordinates by a resultant computation. The formulas become substantially longer, so we record here only the quartic case.
\end{Remark}

\begin{Example}[Fermat hypersurfaces]\label{ex:fermatsmooth}
The Fermat hypersurface gives a simple illustration of Theorem~\ref{nonsingularAssform}. Indeed, if
$f=x_1^d+\cdots+x_n^d$, then $J_f=(x_1^{d-1},\ldots,x_n^{d-1})$, and the Macaulay inverse system of $M_f$ is, up to a nonzero scalar,
\[
F=(y_1\cdots y_n)^{d-2}.
\]
Since $V(F)$ is singular, Theorem~\ref{nonsingularAssform} shows again that the Fermat hypersurface is not Veronese-Avoiding.
\end{Example}

\begin{Corollary}\label{genericcase}
	For every integer $d\ge 3$, a generic polynomial $f\in R_d=k[x,y,z]_d$ defines a Veronese-Avoiding plane curve.
\end{Corollary}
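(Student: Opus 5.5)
The plan is to show that the set of $[f]\in\PP(R_d)$ with $V(f)$ smooth and Veronese-Avoiding is a nonempty Zariski open subset of $\PP(R_d)$. Since $\PP(R_d)$ is irreducible, such a set is dense, and that is exactly the meaning of ``generic''.

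\emph{Openness.} The smooth locus $U_0\subseteq\PP(R_d)$ is open, being the complement of the discriminant hypersurface. On $U_0$, condition {\rm (I)} holds automatically by \eqref{eq:codim-smooth}. Hence $\dim_k(J_f)_{T-1}=\dim_k R_{T-1}-3$ is constant on $U_0$, and $[f]\mapsto (J_f)_{T-1}$ is a morphism $U_0\to \mathrm{Gr}$ into the appropriate Grassmannian of $R_{T-1}$. Concretely, $(J_f)_{T-1}$ is the image of
\[
\mu_f:R_{T-d}^{\oplus 3}\to R_{T-1},
\]
whose entries depend linearly on the coefficients of $f$. Since $\mu_f$ has constant rank on $U_0$, its image varies algebraically. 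Consider the incidence variety $Z=\{(\Lambda,[\ell]) : \ell^{T-1}\in\Lambda\}\subseteq \mathrm{Gr}\times\PP(R_1)$. It is closed, and because $\PP(R_1)$ is projective, its projection to $\mathrm{Gr}$ is closed. Pulling this closed set back to $U_0$ shows that the locus where condition {\rm (II)} fails is closed in $U_0$. Therefore the smooth Veronese-Avoiding locus is open. An equivalent route is through Theorem~\ref{nonsingularAssform}: the inverse system $F$ depends algebraically on $f\in U_0$ (for instance, up to scalar it is the dual of $\mathrm{Hess}(f)$ as the socle generator), and smoothness of $V(F)$ is an open condition.

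\emph{Nonemptiness.} It suffices to exhibit one smooth Veronese-Avoiding plane curve for each $d\ge3$. For $d=3$, Example~\ref{ex:Hesse} gives one, for instance $\lambda=1/2$, and for $d=4$ Example~\ref{ex:symmetric-quartic} gives one. For general $d$, an explicit example is hard to certify, so I would instead use a degeneration argument. The Alper--Isaev form $f_0$ (with $n=3$) is Veronese-Avoiding but singular. Take $f_t=f_0+tg$ with $g$ general, so that $V(f_t)$ is smooth for general $t$. Both conditions are open in the family: rank lower semicontinuity for $\mu_{f_t}$ gives $\dim(J_{f_t})_{T-1}\ge\dim(J_{f_0})_{T-1}$ near $t=0$, while smoothness forces the codimension to be exactly $3$. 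So $\dim (J_{f_t})_{T-1}$ is constant near $0$ and $(J_{f_t})_{T-1}\to (J_{f_0})_{T-1}$ in the Grassmannian. The avoidance condition, being the complement of the closed image of $Z$, is open in the Grassmannian. Hence $V(f_t)$ is smooth and Veronese-Avoiding for small general $t$.

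The main obstacle is this last limiting step. It needs $(J_{f_t})_{T-1}$ to converge to $(J_{f_0})_{T-1}$, which requires $\dim(J_{f_0})_{T-1}$ to already equal the generic value $\dim R_{T-1}-3$; this is exactly the Alper--Isaev codimension statement for $f_0$, which I take as input. Alternatively, one can cite Corollary~\ref{cor:smooth-VAH-nonempty} once it is available.
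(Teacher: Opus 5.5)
Your proposal is correct, but it obtains nonemptiness by a different route from the paper. The paper asserts that conditions (I) and (II) are both open on all of $\PP(R_d)$. It then exhibits explicit smooth examples: Example~\ref{ex:Hesse} for $d=3$, Example~\ref{ex:symmetric-quartic} for $d=4$, and, for $d\ge 5$, the family $x_1^d+x_2^d+x_3^d+d\,x_1x_2x_3(x_1+x_2+x_3)^{d-3}$ from the remark after Example~\ref{ex:symmetric-quartic}. For that family the Veronese-Avoiding property is only asserted (``the same method applies''), not checked.

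You differ in two ways. First, you prove openness only on the smooth locus, where (I) is automatic. This sidesteps the fact that Proposition~\ref{Prop:rank-stratum-VAH} only gives local closedness of the rank stratum. Openness of (I) on all of $\PP(R_d)$ is true, but it needs the extra observation that $\operatorname{rank}\mu_f\le \dim_k R_{T-1}-3$ for every $f$, because the generic rank of $\mu_f$ is its maximal rank.

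Second, for large $d$ you replace the unverified family by a one-parameter deformation of the Alper--Isaev form $f_0$. This is in substance the argument behind Alper--Isaev's Lemma~4.5 and Corollary~\ref{cor:smooth-VAH-nonempty}. Your limiting step works as written. Deleting the finitely many $t\neq 0$ with $V(f_t)$ singular leaves an open neighbourhood of $0$ on which $\mu_{f_t}$ has constant rank. So $t\mapsto (J_{f_t})_{T-1}$ is a morphism to the Grassmannian, and the preimage of the open avoidance locus is a nonempty open set of parameters.

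Your route is uniform in $d$ and rigorous. The price is importing the Alper--Isaev computation for $f_0$, or internally, Theorem~\ref{thm:main-singular} applied to a curve with three nodes in general position. The paper's route, if the remark were actually verified, would produce explicit smooth members.

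Finally, once openness of the whole Veronese-Avoiding locus in $\PP(R_d)$ is granted, any singular example already suffices, for instance one from Theorem~\ref{thm:nodal-stratum-in-VAH}. Your deformation to a smooth curve is therefore a convenience rather than a necessity.
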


\begin{proof}
	Fix $d\ge 3$. The Veronese-Avoiding condition is open in $R_d$, since it is given by the conjunction of the codimension condition and the avoidance condition, both of which are open in families.
	
	It therefore suffices to exhibit, for each degree $d\ge 3$, one polynomial in $R_d$ defining a Veronese-Avoiding plane curve. For $d=3$, this is provided by Example~\ref{ex:Hesse}. For $d=4$, this is given by Example~\ref{ex:symmetric-quartic}. For $d\ge 5$, the remark following Example~\ref{ex:symmetric-quartic} yields a family of smooth Veronese-Avoiding plane curves of degree $d$.
	
	Hence, for every $d\ge 3$, the set of degree-$d$ Veronese-Avoiding plane curves contains a nonempty open subset of $\PP(R_d)$. In other words, a generic polynomial in $R_d$ defines a Veronese-Avoiding plane curve.
\end{proof}

\begin{Remark}
	By~\cite[Lemma~4.5]{Alper-Isaev1}, the existence of a Veronese-Avoiding hypersurface of a fixed degree implies the existence of a smooth Veronese-Avoiding hypersurface of the same degree. Thus singular examples may be used as a bridge to produce smooth Veronese-Avoiding hypersurfaces. For this reason, the study of singular Veronese-Avoiding hypersurfaces is not merely auxiliary: it provides a natural and effective way to understand the smooth case as well. This motivates the next subsection, where we investigate the Veronese-Avoiding property for hypersurfaces with isolated singularities.
\end{Remark}

\subsection{Singular hypersurfaces}
Let $f\in R_d$ be a reduced homogeneous polynomial of degree $d\geq 3$ defining a hypersurface
\[
X=V(f)\subseteq \PP_k^{n-1}
\]
with only isolated singularities. The singular subscheme $\Sing(X)$ is defined by the gradient ideal $J_f$ which consists of finitely many points. Let $p\in \Sing(X)$ be a singular point. After a projective change of coordinates, we may assume that $p=[0:\cdots:0:1]$. On the affine chart $U_{x_n}\cong \AA_k^{n-1}$ with coordinates
$T_i=x_i/x_n$ for $i=1,\ldots,n-1$, the hypersurface $X$ is given by the polynomial
\[
F(T_1,\ldots,T_{n-1})=f(x_1,\ldots,x_{n-1},1)\in A:=k[T_1,\ldots,T_{n-1}].
\]
Let
\[
J_F=\left(\frac{\partial F}{\partial T_1},\ldots,\frac{\partial F}{\partial T_{n-1}}\right),
\qquad
I_F=(F,J_F),
\]
and let $\fp=(T_1,\ldots,T_{n-1})$ be the maximal ideal corresponding to the origin. The \emph{Milnor number} and \emph{Tjurina number} of $F$ at $\fp$ are defined by
\[
\mu_{\fp}(F):=\dim_k A_{\fp}/(J_F)_{\fp},
\qquad
\tau_{\fp}(F):=\dim_k A_{\fp}/(I_F)_{\fp}.
\]
The total Milnor and Tjurina numbers of $X$ are
\begin{equation}\label{eq:MTformula}
\mu(X)=\sum_{p\in \Sing(X)}\mu_p(X),
\qquad
\tau(X)=\sum_{p\in \Sing(X)}\tau_p(X).
\end{equation}
It is always true that
$
\tau(X)\leq \mu(X),
$
and equality holds if and only if all singularities are quasihomogeneous; see~\cite{Saito1}. For a recent algebraic characterization of isolated quasihomogeneous singularities, see~\cite{HAA}.

We now recall some notation for a finite set of points in projective space.
Let $\Gamma\subseteq \PP_k^{n-1}$ be a finite set of reduced points, and let
\[
I(\Gamma)=\bigcap_{p\in \Gamma} I(p)
\]
be its defining ideal. For each $m\geq 0$, consider the linear system
\[
R_m(\Gamma):=\{\,h\in R_m\mid h(p)=0\text{ for all }p\in \Gamma\,\}.
\]
Its \emph{defect} is defined by
\[
\mathrm{def} R_m(\Gamma):=|\Gamma|-\codim(R_m(\Gamma)).
\]
We shall use only the case $m=1$, where
\begin{equation}\label{eq:defPoints}
\mathrm{def} R_1(\Gamma)=|\Gamma|-n+\dim_k R_1(\Gamma).
\end{equation}
Thus
$
\mathrm{def} R_1(\Gamma)=0
$
if and only if the points of $\Gamma$ impose independent linear conditions on $R_1$. Equivalently, $|\Gamma|\leq n$ and $\Gamma$ spans a $\PP_k^{|\Gamma|-1}$. 
In particular, when $|\Gamma|=n$, this is equivalent to saying that the points of $\Gamma$ are in general linear position.

Let $J\subseteq R=k[x_1,\ldots,x_n]$ be a homogeneous ideal, and let $\fm=(x_1,\ldots,x_n)$ be the irrelevant maximal ideal. The \emph{saturation} of $J$ is defined by
\[
J^{\mathrm{sat}}:=(J:_R\fm^\infty)=\{\,h\in R \mid \fm^t h\subseteq J \text{ for some } t \ge 0\};
\]
see \cite[15.10.6]{EisenbudBook}. The ideal $J^{\mathrm{sat}}$ is homogeneous, and $J$ and $J^{\mathrm{sat}}$ define the same closed subscheme of $\PP_k^{n-1}$. We say that $J$ is \emph{saturated} if $J=J^{\mathrm{sat}}$. In particular, if $V(J)$ is reduced, then $J^{\mathrm{sat}}=I(V(J))=\sqrt{J}$; see \cite[Section~2]{A.Dimca1}.

For the singular subscheme $\Sing(X)=V(J_f)$, Dimca defines the \emph{defect} in degree $m$ by
\[
\mathrm{def}_m(\Sing(X)):=\tau(X)-\dim_k\!\left(\frac{R_m}{(J_f^{\mathrm{sat}})_m}\right).
\]
In degree $m=1$, this becomes
\begin{equation}\label{def1Sing(X)}
\mathrm{def}_1(\Sing(X))
=\tau(X)-\dim_k\!\left(\frac{R_1}{(J_f^{\mathrm{sat}})_1}\right)
=\tau(X)-n+\dim_k (J_f^{\mathrm{sat}})_1.
\end{equation}

We also recall the \emph{coincidence threshold}
\[
\mathrm{ct}(X):=\max\left\{ q \ \middle|\ \dim_k(M_f)_m=\dim_k(M_{f_s})_m \text{ for all } m\le q \right\},
\]
where $f_s$ is a homogeneous polynomial of degree $d$ defining a smooth hypersurface in $\PP_k^{n-1}$. By \cite[Proposition~2]{A.Dimca1}, one has
\begin{equation}\label{defY}
	\mathrm{def}_1(\Sing(X))=0
	\quad\Longleftrightarrow\quad
	\mathrm{ct}(X)\geq T=n(d-2).
\end{equation}

We have already seen that every smooth hypersurface satisfies condition {\rm (I)} of Definition~\ref{VAH}. For hypersurfaces with isolated singularities, the problem of characterizing the same condition leads to the following basic lemma.

\begin{Lemma}\label{lem:HFMilnor}
	Let $X=V(f)\subseteq \PP_k^{n-1}$ be a hypersurface of degree $d\geq 3$ with only isolated singularities, and set $T=n(d-2)$. Then the following are equivalent:
	\begin{enumerate}
		\item[{\rm (a)}] $\codim \PP((J_f)_{T-1})=n$;
		\item[{\rm (b)}] $\mathrm{def}_1(\Sing(X))=0$;
		\item[{\rm (c)}] $\mathrm{ct}(X)\geq T$.
	\end{enumerate}
	If, moreover, $X$ is nodal with singular set $\Gamma$, then these conditions are also equivalent to $\mathrm{def} R_1(\Gamma)=0$. In particular, if $X$ has exactly $n$ nodes, then conditions {\rm (a)}--{\rm (c)} are equivalent to the singular points being in general linear position.
\end{Lemma}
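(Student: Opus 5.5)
The plan is to reduce (a) to a statement about $\dim_k(M_f)_{T-1}$ and then compare this with the Tjurina number. The equivalence (b)$\Leftrightarrow$(c) is exactly Dimca's result \eqref{defY}, so it suffices to prove (a)$\Leftrightarrow$(b) and then specialise to the nodal case.

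For (a)$\Leftrightarrow$(b), I will use the structure of Milnor algebras of hypersurfaces with isolated singularities in Dimca's framework.

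First, for $m$ large the Hilbert function of $M_f$ stabilises at $\tau(X)$. Since $J_f^{\mathrm{sat}}$ defines the Tjurina subscheme, we have $\dim_k (R/J_f^{\mathrm{sat}})_m=\tau(X)$ for $m\gg0$.

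Second, there is a duality for the Jacobian module $N(f)=J_f^{\mathrm{sat}}/J_f$, namely $\dim N(f)_m=\dim N(f)_{T-m}$ (Sernesi / Dimca--Sticlaru self-duality).

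Combining these, I write
\[
\dim_k(M_f)_{T-1}=\dim_k\bigl(R/J_f^{\mathrm{sat}}\bigr)_{T-1}+\dim_k N(f)_{T-1}=\dim_k\bigl(R/J_f^{\mathrm{sat}}\bigr)_{T-1}+\dim_k N(f)_{1}.
\]
Here $N(f)_1=(J_f^{\mathrm{sat}})_1$, because $(J_f)_1=0$ when $d\ge3$. For the first summand I will argue that $\dim_k (R/J_f^{\mathrm{sat}})_{T-1}=\tau(X)$. The reason is that a zero-dimensional saturated scheme of degree $\tau$ has regularity index at most $\tau-1$. I also expect $\tau(X)\le T-1$ whenever condition (a) is in question; if that fails, I will instead cite Dimca's formula $\dim (M_f)_{T-1}=\dim (M_{f_s})_{T-1}+\mathrm{def}_1$, i.e. $\dim(M_f)_{T-m}=\dim(M_{f_s})_{T-m}+\mathrm{def}_m(\Sing X)$ from \cite{A.Dimca1}. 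Applied with $m=1$ and $\dim(M_{f_s})_{T-1}=n$ from \eqref{eq:codim-smooth}, this formula gives $\codim\PP((J_f)_{T-1})=n+\mathrm{def}_1(\Sing(X))$. Either route yields $\dim(M_f)_{T-1}=\tau(X)+\dim(J_f^{\mathrm{sat}})_1=n+\mathrm{def}_1$ by \eqref{def1Sing(X)}, so (a)$\Leftrightarrow$(b) follows. Pinning down the correct citation, or the regularity bound needed for the direct argument, is the main obstacle. My first move will be to invoke Dimca's formula directly.

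For the nodal case, each node is a quasihomogeneous $A_1$ singularity with $\tau_p=1$ and reduced Tjurina scheme. Hence $\tau(X)=|\Gamma|$ and $J_f^{\mathrm{sat}}=I(\Gamma)$, using the saturation remark for reduced $V(J)$. Then $(J_f^{\mathrm{sat}})_1=R_1(\Gamma)$, so \eqref{def1Sing(X)} and \eqref{eq:defPoints} agree, giving $\mathrm{def}_1(\Sing X)=\mathrm{def}R_1(\Gamma)$. When $|\Gamma|=n$, the discussion after \eqref{eq:defPoints} identifies $\mathrm{def}R_1(\Gamma)=0$ with general linear position.
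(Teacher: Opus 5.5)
Your proposal is correct, and the route you commit to is exactly the paper's proof. That route cites Dimca's Theorem~1 to get $\dim_k(M_f)_{T-1}=\dim_k(M_{f_s})_{T-1}+\mathrm{def}_1(\Sing(X))=n+\mathrm{def}_1(\Sing(X))$, cites his Proposition~2 for (b)$\Leftrightarrow$(c), and uses $\tau(X)=|\Gamma|$ and $J_f^{\mathrm{sat}}=I(\Gamma)$ in the nodal case.

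Your alternative self-duality route also works, but not through the bound $\tau(X)\le T$, which can fail: for example, $x^4+y^4$ on $\PP_k^2$ has $\tau=9>T=6$. Instead, let $h$ be the Hilbert function of $R/J_f^{\mathrm{sat}}$; then $\dim_k(M_f)_{T-1}=h(T-1)+n-h(1)$, so (a) says $h(T-1)=h(1)$. Since $T-1\ge 2$ for $n\ge 3$, and $h$ is nondecreasing and constant once it stalls, this is equivalent to $h(1)=\tau(X)$, which is (b).
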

\begin{proof}
Since
\[
\codim_{\PP(R_{T-1})}\PP((J_f)_{T-1})=\dim_k(M_f)_{T-1},
\]
the equivalence between {\rm (a)} and {\rm (b)} follows from \cite[Theorem~1]{A.Dimca1}. The equivalence between {\rm (b)} and {\rm (c)} is \cite[Proposition~2]{A.Dimca1}. If $X$ is nodal, then $\tau(X)=|\Gamma|$ and $J_f^{\mathrm{sat}}=I(\Gamma)$, so \eqref{def1Sing(X)} gives
\[
\mathrm{def}_1(\Sing(X))
=
|\Gamma|-n+\dim_k I(\Gamma)_1
=
\mathrm{def}\,R_1(\Gamma).
\]
This proves the remaining assertions.
\end{proof}
For the main theorem, we shall also use the Jacobian module
\[
N(f):=H^0_{\fm}(M_f)=J_f^{\mathrm{sat}}/J_f.
\]
When $X$ has only isolated singularities, this module satisfies the graded self-duality
\begin{equation}\label{eq:self-duality}
N(f)_q^\vee \simeq N(f)_{T-q}
\qquad\text{for all }q\in \mathbb Z.
\end{equation}
We now describe the Veronese-Avoiding property in the singular setting. We begin with singular cubic curves, where one obtains a complete characterization, and then turn to hypersurfaces with exactly $n$ isolated singular points.

\begin{Proposition}\label{prop:cubic-plane-classification}
	Let $R=k[x,y,z]$, and let $X=V(f)\subseteq \PP_k^2$ be a reduced singular cubic curve. Then the following are equivalent:
	\begin{enumerate}
		\item[{\rm (a)}] $X$ is Veronese-Avoiding;
		\item[{\rm (b)}] $X$ is nodal.
	\end{enumerate}
\end{Proposition}

\begin{proof}
	Since $d=3$ and $n=3$, one has $T=3$ and hence $T-1=2$. Thus $X$ is Veronese-Avoiding if and only if $\codim \PP((J_f)_2)=3$ and $\PP((J_f)_2)\bigcap V_3^2=\emptyset$.
	
	\smallskip
	
	\noindent
	{ (a)$\Rightarrow$(b).}
	Assume that $X$ is Veronese-Avoiding. Suppose that $X$ is not nodal. Then $X$ has a singular point $p$ which is not an ordinary double point. After a projective change of coordinates, we may assume that $p=[0:0:1]$. Since $X$ is cubic and singular at $p$, we may write $f=z\,q(x,y)+c(x,y)$, where $q\in k[x,y]_2$ and $c\in k[x,y]_3$. The point $p$ is nodal if and only if $q$ is reduced, that is, the product of two distinct linear forms. Since $p$ is not nodal, $q$ must be a square, say $q=\ell^2$ for some nonzero linear form $\ell\in k[x,y]_1$. But then $\partial f/\partial z=q=\ell^2$, so $[\ell^2]\in \PP((J_f)_2)\bigcap V_3^2$, contradicting condition {\rm (II)} in Definition~\ref{VAH}. Therefore $X$ is nodal.
	
	\smallskip
	
	\noindent
	{ (b)$\Rightarrow$(a).}
	Assume that $X$ is a reduced nodal cubic. We distinguish the possible cases.
	
	\medskip
	
	\noindent
	{\bf Case 1:} $X$ is irreducible.
	Then $X$ has exactly one node and is projectively equivalent to $f=xyz+x^3+y^3$. By Proposition~\ref{prop:family-VAH-iff-cubic}, this curve is Veronese-Avoiding.
	
	\medskip
	
	\noindent
	{\bf Case 2:} $X$ is the union of a line and a smooth conic meeting transversely.
	Then $X$ is projectively equivalent to $f=z(xy-z^2)=xyz-z^3$. Its gradient ideal is $J_f=(yz,\;xz,\;xy-3z^2)$, hence $(J_f)_2=\langle yz,\;xz,\;xy-3z^2\rangle_k$. Thus $\dim_k(J_f)_2=3$, so $\codim \PP((J_f)_2)=3$. If a square $(ax+by+cz)^2$ lies in $(J_f)_2$, then comparing coefficients with the basis $x^2,xy,xz,y^2,yz,z^2$ shows that $a=b=0$. Hence the square reduces to $c^2z^2$, but no nonzero multiple of $z^2$ lies in $\langle yz,\;xz,\;xy-3z^2\rangle_k$ without also introducing an $xy$-term. Therefore $c=0$, and so $\PP((J_f)_2)\bigcap V_3^2=\emptyset$. Thus $X$ is Veronese-Avoiding.
	
	\medskip
	
	\noindent
	{\bf Case 3:} $X$ is the union of three distinct lines in general position.
	Then $X$ is projectively equivalent to $f=xyz$. Its gradient ideal is $J_f=(yz,\;xz,\;xy)$, so $(J_f)_2=\langle yz,\;xz,\;xy\rangle_k$. Again $\codim \PP((J_f)_2)=3$. If a square $(ax+by+cz)^2$ lies in $(J_f)_2$, then its $x^2$, $y^2$, and $z^2$ coefficients must all vanish, hence $a=b=c=0$. Thus $\PP((J_f)_2)\bigcap V_3^2=\emptyset$. Therefore $X$ is Veronese-Avoiding.
	
	Combining the three cases, every reduced nodal cubic is Veronese-Avoiding.
\end{proof}

\begin{Theorem}\label{thm:main-singular}
	Let $X=V(f)\subseteq \PP_k^{n-1}$ be a reduced hypersurface of degree $d\geq 3$ with only isolated singularities. Assume that $\Sing(X)$ consists of exactly $n$ points. Then the following are equivalent:
	\begin{enumerate}
		\item[{\rm (a)}] $X$ is Veronese-Avoiding;
		\item[{\rm (b)}] $\Sing(X)$ consists of $n$ nodes in general linear position.
	\end{enumerate}
\end{Theorem}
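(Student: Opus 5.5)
We prove the two implications separately. Lemma~\ref{lem:HFMilnor} handles the codimension condition (I), and the self-duality \eqref{eq:self-duality} of the Jacobian module handles the avoidance condition (II).

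\emph{(a)$\Rightarrow$(b).} Assume $X$ is Veronese-Avoiding. By Lemma~\ref{lem:HFMilnor}, condition (I) gives $\mathrm{def}_1(\Sing(X))=0$. By \eqref{def1Sing(X)} this reads
\[
\tau(X)=n-\dim_k(J_f^{\mathrm{sat}})_1 .
\]
In particular $\tau(X)\le n$. Each of the $n$ singular points has $\tau_p\ge 1$, so $\tau(X)\ge n$. Hence $\tau(X)=n$, every $\tau_p=1$, and $\dim_k(J_f^{\mathrm{sat}})_1=0$.

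A singularity with Tjurina number one is an $A_1$ singularity, i.e.\ an ordinary node. (By Saito, $\tau_p\le\mu_p$, and $\tau_p=1$ forces $\mu_p=1$, which is the Morse case.) So $X$ is nodal with $\Gamma=\Sing(X)$ and $J_f^{\mathrm{sat}}=I(\Gamma)$. Then $I(\Gamma)_1=0$, so the $n$ points impose independent conditions on $R_1$. This is general linear position, exactly as in the last sentence of Lemma~\ref{lem:HFMilnor}.

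\emph{(b)$\Rightarrow$(a).} Assume $\Gamma=\Sing(X)$ consists of $n$ nodes in general linear position. Condition (I) holds by Lemma~\ref{lem:HFMilnor}. For (II), I will show that
\[
(J_f)_{T-1}=I(\Gamma)_{T-1}.
\]
Since $J_f^{\mathrm{sat}}=I(\Gamma)$, this is equivalent to $N(f)_{T-1}=0$. By \eqref{eq:self-duality}, $N(f)_{T-1}^\vee\simeq N(f)_1=I(\Gamma)_1/(J_f)_1$. Here $(J_f)_1=0$ because $d\ge 3$, and $I(\Gamma)_1=0$ by general linear position. Hence $N(f)_{T-1}=0$.

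Now suppose $\ell^{T-1}\in (J_f)_{T-1}=I(\Gamma)_{T-1}$ with $\ell\neq 0$. Then $\ell^{T-1}$ vanishes on $\Gamma$, so $\ell(p)=0$ for every $p\in\Gamma$, i.e.\ $\ell\in I(\Gamma)_1=0$. This is a contradiction, and (II) follows. (Note $T-1\ge n-1\ge 1$ since $d\ge3$.)

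I expect the main obstacle to be the local step in (a)$\Rightarrow$(b): justifying carefully that $\tau_p=1$ forces an ordinary node. The cleanest route is the finite-determinacy/Morse lemma. Write the affine equation at $p$ as $F$, with $F(0)=0$ and $\nabla F(0)=0$. Since $\dim A_\fp/(F,J_F)_\fp=1$, the ideal $(F,J_F)$ equals $\fp$ locally. Because $F\in\fp^2$, the partials $\partial F/\partial T_i$ must generate $\fp$ modulo $\fp^2$. So the Hessian of $F$ at $0$ is nondegenerate, which is the definition of an ordinary node.

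A secondary point to check is that the self-duality \eqref{eq:self-duality} applies with $T=n(d-2)$ in the stated normalization; it only requires isolated singularities, which is part of the hypothesis.
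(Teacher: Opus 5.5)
Your proposal is correct and follows essentially the same route as the paper. Lemma~\ref{lem:HFMilnor} together with $\tau_p\ge 1$ forces $\tau(X)=n$ and $(J_f^{\mathrm{sat}})_1=0$; the local ``Tjurina ideal equals the maximal ideal'' argument gives a nondegenerate quadratic part; and self-duality gives $(J_f)_{T-1}=I(\Gamma)_{T-1}$. Your coordinate-free finish ($\ell(p)^{T-1}=0$ for all $p\in\Gamma$, hence $\ell\in I(\Gamma)_1=0$) is a slightly cleaner version of the paper's pure-power coefficient check.

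One caution: the parenthetical claim that $\tau_p\le\mu_p$ plus $\tau_p=1$ forces $\mu_p=1$ is a non sequitur, since it only gives $\mu_p\ge 1$. It is harmless, because your final-paragraph argument replaces it, and that argument is exactly the one the paper uses.
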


\begin{proof}
\noindent
{\rm (b)$\Rightarrow$(a).}
Let $\Gamma=\Sing(X)=\{p_1,\ldots,p_n\}$. By Lemma~\ref{lem:HFMilnor}, condition {\rm (I)} in Definition~\ref{VAH} holds. It remains to prove condition {\rm (II)}.

After a projective change of coordinates, we may assume that the points of $\Gamma$ are the coordinate points in $\PP_k^{n-1}$. Let $I=I(\Gamma)$. Since all singularities are nodes, the singular scheme is reduced, and hence $J_f^{\mathrm{sat}}=I$. Therefore
\[
N(f)=J_f^{\mathrm{sat}}/J_f=I/J_f.
\]
No nonzero linear form vanishes at all coordinate points, so $I_1=0$. Since $J_f$ is generated in degree $d-1\geq 2$, we also have $(J_f)_1=0$. Thus $N(f)_1=0$. By the self-duality \eqref{eq:self-duality}, $N(f)_{T-1}=0$, and therefore
\[
(J_f)_{T-1}=I_{T-1}.
\]

A form of degree $T-1$ vanishes at all coordinate points if and only if the coefficients of the pure powers
\[
x_1^{T-1},\ldots,x_n^{T-1}
\]
are all zero. Hence $I_{T-1}$ is spanned by all degree-$(T-1)$ monomials except the pure powers. In particular,
\[
\codim_{\PP(R_{T-1})}\PP(I_{T-1})=n.
\]
Now suppose that $[\ell^{T-1}]\in \PP(I_{T-1})\cap V_n^{T-1}$, where
\[
\ell=a_1x_1+\cdots+a_nx_n.
\]
Then the coefficients of $x_1^{T-1},\ldots,x_n^{T-1}$ in $\ell^{T-1}$ vanish, so
\[
a_1^{T-1}=\cdots=a_n^{T-1}=0.
\]
Since $\operatorname{char}(k)=0$, this gives $a_1=\cdots=a_n=0$, a contradiction. Therefore
\[
\PP(I_{T-1})\bigcap V_n^{T-1}=\emptyset.
\]
Since $(J_f)_{T-1}=I_{T-1}$, condition {\rm (II)} holds. Hence $X$ is Veronese-Avoiding.

	\medskip
	
	\noindent
	{\rm  (a)$\Rightarrow$(b).}
	Assume that $X$ is Veronese-Avoiding, and write $\Gamma=\Sing(X)=\{p_1,\dots,p_n\}$. Let $I:=J_f^{\mathrm{sat}}$. Since $X$ is Veronese-Avoiding, condition {\rm (I)} holds. Hence, by Lemma~\ref{lem:HFMilnor}, $\mathrm{def}_1(\Sing(X))=0$. By definition,
	\[
	\mathrm{def}_1(\Sing(X))=\tau(X)-\dim_k\!\left(\frac{R_1}{I_1}\right)=\tau(X)-n+\dim_k(I_1),
	\]
	so
	\begin{equation}\label{eq:tau-bound}
		\tau(X)=n-\dim_k(I_1)\leq n.
	\end{equation}
	On the other hand, $\Gamma$ consists of exactly $n$ singular points, and every isolated singular point has local Tjurina number at least one. Hence $\tau(X)=\sum_{p\in \Gamma}\tau_p(X)\geq n$. Combining this with \eqref{eq:tau-bound}, we obtain $\tau(X)=n$ and $\dim_k(I_1)=0$.
	
	Since $\tau(X)=n$ and $|\Gamma|=n$, it follows that $\tau_p(X)=1$ for every $p\in \Gamma$. Fix $p\in \Gamma$, and choose affine coordinates so that $p$ corresponds to the origin and $X$ is locally defined by $F\in k[[T_1,\dots,T_{n-1}]]$ with no linear term. Since $\tau_p(X)=1$, the local Tjurina algebra
	\[
	k[[T_1,\dots,T_{n-1}]]/(F,\partial F/\partial T_1,\dots,\partial F/\partial T_{n-1})
	\]
	has $k$-dimension one. Therefore the Tjurina ideal is the maximal ideal $(T_1,\dots,T_{n-1})$. Passing modulo $\fm^2$, one sees that the linear parts of $\partial F/\partial T_1,\dots,\partial F/\partial T_{n-1}$ span $\fm/\fm^2$. Equivalently, the quadratic part of $F$ is nondegenerate. Hence $p$ is an ordinary double point, that is, a node.
	
	Finally, since $I_1=0$, there is no nonzero linear form vanishing on all points of $\Gamma$. For a set of exactly $n$ points in $\PP_k^{n-1}$, this is equivalent to saying that the points are in general linear position. Thus $\Gamma$ consists of $n$ nodes in general linear position.
\end{proof}

\begin{Remark}
The preceding theorem gives a geometric interpretation of the construction used
by Alper--Isaev in the proof of \cite[Proposition~4.3]{Alper-Isaev1}. In their
notation the degree is denoted by $m$; here we write it as $d$. They consider
the auxiliary form
\[
f_0=
\begin{cases}
\displaystyle \sum_{1\leq i<j<k\leq n}x_ix_jx_k, & d=3,\\[6pt]
\displaystyle \sum_{1\leq i<j\leq n}
\left(x_i^{d-2}x_j^2+x_i^2x_j^{d-2}\right), & d\geq 4.
\end{cases}
\]
Let $T=n(d-2)$. In the proof of \cite[Proposition~4.3]{Alper-Isaev1},
they show that $(J_{f_0})_{T-1}$ has codimension $n$ in $R_{T-1}$ and
that
\[
(J_{f_0})_{T-1}\cap \{L^{T-1}:L\in R_1\}=0.
\]
By \cite[Lemma~4.4]{Alper-Isaev1}, this is equivalent to the
non-vanishing of the discriminant of the associated form of $f_0$.
In our terminology, this says exactly that $f_0$ is Veronese-Avoiding.

On the other hand, the coordinate points $p_1,\ldots,p_n$ are ordinary
nodes of $V(f_0)$ and they are in general linear position. Thus our
main theorem explains the Alper--Isaev construction conceptually: their
auxiliary form lies on the nodal boundary of the smooth Veronese-Avoiding
locus, and this boundary point is precisely of the type classified above.
\end{Remark}

\begin{Remark}
	Proposition~\ref{prop:cubic-plane-classification} gives a complete description of the singular cubic curves in $\PP_k^2$ that are Veronese-Avoiding: they are exactly the nodal ones. Theorem~\ref{thm:main-singular}, on the other hand, applies in every dimension and every degree $d\geq 3$, under the additional assumption that the singular locus consists of exactly $n$ isolated points. In that setting, the Veronese-Avoiding property is completely determined by the geometry of the singular locus: it is equivalent to requiring that the singularities be $n$ nodes in general linear position. 
\end{Remark}
\begin{Example}\label{ex:one-node-VAH-family}
	Let $d\ge 4$, and consider
	\[
	f=xyz^{d-2}+x^d+y^d+x^{d-1}z \in R=k[x,y,z].
	\]
	Then the plane curve $X=V(f)\subseteq \PP_k^2$ has a unique singular point, namely $[0:0:1]$, and this singularity is a node. We claim that $X$ is Veronese-Avoiding.
	
	Since a single reduced point imposes one independent linear condition on $R_1$, one has $\mathrm{def}\,R_1(\Sing(X))=0$. Therefore, by Lemma~\ref{lem:HFMilnor}, condition {\rm (I)} in Definition~\ref{VAH} holds.
	
	It remains to prove condition {\rm (II)}. Set $m=T-1=3d-7$, and suppose that $\ell^m\in (J_f)_m$ for some nonzero linear form $\ell=ax+by+cz$. Since every monomial occurring in the generators $f_x$, $f_y$, and $f_z$ is divisible by $x$ or by $y$, the same is true for every element of $(J_f)_m$. In particular, $(J_f)_m$ contains no nonzero multiple of $z^m$, so the coefficient of $z^m$ in $\ell^m$ must vanish. Hence $c=0$, and therefore $\ell=ax+by$.
	
	Now write
	\[
	(ax+by)^m=A f_x+B f_y+C f_z,
	\qquad \deg A=\deg B=\deg C=2d-6.
	\]
	Comparing coefficients and descending on the $z$-degree, one finds successively that the coefficients of $A$, $B$, and $C$ are forced to vanish step by step unless $a=b=0$. Hence there is no nonzero linear form $\ell$ such that $\ell^m\in (J_f)_m$. Equivalently, $\PP((J_f)_m)\bigcap V_3^m=\emptyset$.
	
	Thus condition {\rm (II)} also holds, and the claim follows.
\end{Example}
	\begin{Remark}\label{rem:global-phenomenon}
		The hypothesis $|\Sing(X)|=n$ in Theorem~\ref{thm:main-singular} is essential. More generally, when the singular locus has fewer than $n$ points, the Veronese-Avoiding property is not determined solely by the number of singular points or by their local analytic type.
		
		Indeed, if $|\Sing(X)|<n$, then there exists a nonzero linear form vanishing on $\Sing(X)$, so $I(\Sing(X))_1\neq 0$. Since $J_f$ is generated in degree $d-1\ge 2$, one has $(J_f)_1=0$, and therefore $N(f)_1\neq 0$, where $N(f)=J_f^{\mathrm{sat}}/J_f$ is the Jacobian module. By the self-duality of $N(f)$, it follows that $N(f)_{T-1}\neq 0$. Equivalently, $(J_f)_{T-1}$ is, in general, a proper subspace of $(J_f^{\mathrm{sat}})_{\,T-1}=I(\Sing(X))_{T-1}$. Thus the degree-$(T-1)$ piece of the Jacobian ideal is no longer determined only by the singular locus, and the Veronese-Avoiding condition depends on finer global features of the defining equation.
		
		This phenomenon already appears in the family
        \[f_d=xyz^{d-2}+x^d+y^d \qquad\text{and}\qquad
        g_d=xyz^{d-2}+x^d+y^d+x^{d-1}z,
        \] where $d\geq 4$.
		Both $V(f_d)$ and $V(g_d)$ have a unique singular point at $[0:0:1]$, and this singularity is a node. However, by Proposition~\ref{prop:family-VAH-iff-cubic}, the curve $V(f_d)$ is not Veronese-Avoiding for $d\ge 4$, whereas Example~\ref{ex:one-node-VAH-family} shows that $V(g_d)$ is Veronese-Avoiding. Hence two hypersurfaces may have the same number of singular points and the same local analytic type, while only one of them is Veronese-Avoiding.
		
		In other words, outside the range covered by Theorem~\ref{thm:main-singular}, the Veronese-Avoiding condition is a genuinely global property of the Jacobian ideal, rather than a condition depending only on the singularity type.
	\end{Remark}
\begin{Example}\label{ex:one-node-map}
	Consider the quartic
	\[
	f=xyz^2+x^4+y^4+x^3z \in R=k[x,y,z].
	\]
	Then $X=V(f)\subseteq \PP_k^2$ has a unique singular point at $p=[0:0:1]$, and this singularity is a node. Let $I=I(p)=(x,y)$. Since $p$ is a single reduced point, one has $\mathrm{def}\,R_1(\Sing(X))=0$, so condition {\rm (I)} holds by Lemma~\ref{lem:HFMilnor}.
	
	Now $m=T-1=5$, and $I_1=\langle x,y\rangle_k$. Since $(J_f)_1=0$, one has $N(f)_1=I_1$, and by self-duality $\dim_k N(f)_5=2$. Thus the class map
	\[
	\phi_f:\PP(I_1)\dashrightarrow \PP(N(f)_5),\qquad [\ell]\longmapsto [\,\ell^5 \bmod (J_f)_5\,],
	\]
	may be viewed, after choosing bases, as a rational map from $\PP^1$ to $\PP^1$. As shown above, there is no nonzero linear form $\ell=ax+by$ such that $\ell^5\in (J_f)_5$. Therefore $\phi_f$ is everywhere defined, condition {\rm (II)} holds, and $X$ is Veronese-Avoiding.
	
	This should be compared with the quartic
	\[
	f_0=xyz^2+x^4+y^4,
	\]
	for which $x^4,y^4\in J_{f_0}$, hence $x^5,y^5\in (J_{f_0})_5$. In this case the corresponding map $\phi_{f_0}$ has base points, and $V(f_0)$ is not Veronese-Avoiding.
\end{Example}

The preceding example suggests a more general interpretation of the Veronese-Avoiding condition when the singular locus consists of fewer than $n$ nodes. In this situation, condition {\rm (I)} is automatic as soon as the nodes impose independent linear conditions, and the essential issue becomes whether the map defined by $\ell\mapsto \ell^{\,T-1}$ acquires base points modulo the Jacobian ideal.

\begin{Theorem}\label{thm:nodal-r-points}
	Let $X=V(f)\subseteq \PP_k^{n-1}$ be a reduced hypersurface of degree $d\ge 3$ with isolated singularities, and let $\Gamma=\Sing(X)=\{p_1,\ldots,p_r\}$ with $r<n$. Assume that $\Gamma$ consists of $r$ nodes imposing independent linear conditions on $R_1$. Set $m=T-1=n(d-2)-1$ and $I:=I(\Gamma)$. Then the following are equivalent:
	\begin{enumerate}
		\item[{\rm (a)}] $X$ is Veronese-Avoiding;
		\item[{\rm (b)}] there is no nonzero linear form $\ell\in I_1$ such that $\ell^m\in (J_f)_m$;
		\item[{\rm (c)}] the rational map
		\[
		\phi_f:\PP(I_1)\dashrightarrow \PP(N(f)_m),\qquad [\ell]\longmapsto [\,\ell^m \bmod (J_f)_m\,],
		\]
		is everywhere defined.
	\end{enumerate}
	Moreover, $\dim_k I_1=\dim_k N(f)_m=n-r$, so after choosing bases, $\phi_f$ may be viewed as a rational map
	\[
	\PP^{\,n-r-1}\dashrightarrow \PP^{\,n-r-1}.
	\]
\end{Theorem}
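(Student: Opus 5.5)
The plan is to prove the dimension statement first and then chain (a)$\Leftrightarrow$(b)$\Leftrightarrow$(c).

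\emph{Dimensions.} Since $\Gamma$ consists of $r$ points imposing independent linear conditions, we have $\mathrm{def}\,R_1(\Gamma)=0$. By \eqref{eq:defPoints} this gives $\dim_k I_1=n-r$. Because all singularities are nodes, the singular scheme is reduced, so $J_f^{\mathrm{sat}}=I$ (as in the proof of Theorem~\ref{thm:main-singular}). Since $J_f$ is generated in degree $d-1\ge 2$, we have $(J_f)_1=0$, hence $N(f)_1=I_1$. The self-duality \eqref{eq:self-duality} then gives $\dim_k N(f)_m=\dim_k N(f)_1=n-r$. This is the ``Moreover'' part.

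\emph{Condition (I) is automatic.} Since $\mathrm{def}\,R_1(\Gamma)=0$ and $X$ is nodal, Lemma~\ref{lem:HFMilnor} shows that condition (I) holds. Therefore (a) is equivalent to condition (II): there is no nonzero $\ell\in R_1$ with $\ell^m\in (J_f)_m$.

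\emph{(a)$\Leftrightarrow$(b).} The implication (a)$\Rightarrow$(b) is immediate, since (b) only restricts $\ell$ to $I_1$. For the converse, suppose $\ell^m\in(J_f)_m$ with $\ell\neq0$. Then $\ell^m\in J_f\subseteq J_f^{\mathrm{sat}}=I$, so $\ell^m$ vanishes at every $p_i$. Hence $\ell(p_i)^m=0$, so $\ell(p_i)=0$ for all $i$, i.e. $\ell\in I_1$. This contradicts (b). The key observation here is that every linear form whose power lies in $J_f$ must already lie in $I_1$; this is what confines the problem to $\PP(I_1)$.

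\emph{(b)$\Leftrightarrow$(c).} For $\ell\in I_1$ we have $\ell^m\in I_m=(J_f^{\mathrm{sat}})_m$, so the class of $\ell^m$ is a well-defined element of $N(f)_m=I_m/(J_f)_m$. The map $\ell\mapsto \ell^m \bmod (J_f)_m$ is homogeneous of degree $m$ in the coordinates of $\ell$ with respect to a basis of $I_1$, so after choosing bases it is given by $n-r$ forms of degree $m$ on $\PP^{n-r-1}$. Its base locus is exactly the set of $[\ell]$ with $\ell^m\in(J_f)_m$. So $\phi_f$ is everywhere defined precisely when (b) holds.

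The only delicate step is the identification $J_f^{\mathrm{sat}}=I$ together with self-duality, which the argument uses both for the dimension count and to make $\phi_f$ land in $N(f)_m$. Both were already used in Theorem~\ref{thm:main-singular}, so I expect no real obstacle.
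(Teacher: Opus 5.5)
Your proposal is correct and follows essentially the same route as the paper. Both arguments get condition (I) from Lemma~\ref{lem:HFMilnor} using $J_f^{\mathrm{sat}}=I$ for a nodal singular scheme, and both use the radicality of $I$ to force any $\ell$ with $\ell^m\in(J_f)_m$ into $I_1$. Both identify the base locus of $\phi_f$ inside $N(f)_m=I_m/(J_f)_m$, and both get the dimension count from $(J_f)_1=0$ together with the self-duality \eqref{eq:self-duality}; the only difference is that you do the dimension count first rather than last.
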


\begin{proof}
	Since $\Gamma$ consists of $r$ nodes, one has $\tau(X)=r$ and $J_f^{\mathrm{sat}}=I(\Gamma)=I$. Since the points impose independent linear conditions on $R_1$, one has $\dim_k I_1=n-r$. Therefore
	\[
	\mathrm{def}_1(\Sing(X))=\tau(X)-n+\dim_k I_1=r-n+(n-r)=0.
	\]
	By Lemma~\ref{lem:HFMilnor}, condition {\rm (I)} in Definition~\ref{VAH} holds. Hence $X$ is Veronese-Avoiding if and only if condition {\rm (II)} holds.
	
	Now suppose that $\ell^m\in (J_f)_m$ for some nonzero linear form $\ell\in R_1$. Since $(J_f)_m\subseteq I_m$ and $I=I(\Gamma)$ is radical, it follows that $\ell$ vanishes at every point of $\Gamma$, that is, $\ell\in I_1$. Thus condition {\rm (II)} is equivalent to saying that there is no nonzero $\ell\in I_1$ with $\ell^m\in (J_f)_m$. This proves the equivalence of {\rm (a)} and {\rm (b)}.
	
	Since $N(f)_m=I_m/(J_f)_m$, the class of $\ell^m$ defines a point of $\PP(N(f)_m)$ unless $\ell^m\in (J_f)_m$. Hence the base locus of $\phi_f$ is exactly the set of points $[\ell]\in \PP(I_1)$ such that $\ell^m\in (J_f)_m$. This proves the equivalence of {\rm (b)} and {\rm (c)}.
	
	Finally, since $(J_f)_1=0$, one has $N(f)_1=I_1$, and by the self-duality \eqref{eq:self-duality},
	\[
	\dim_k N(f)_m=\dim_k N(f)_1=\dim_k I_1=n-r.
	\]
\end{proof}

\begin{Remark}\label{rem:nodal-r-points}
	Theorem~\ref{thm:nodal-r-points} should be viewed as the natural extension of Theorem~\ref{thm:main-singular} to the case $r<n$. In this range, once the singular locus consists of $r$ nodes imposing independent linear conditions on $R_1$, condition {\rm (I)} is automatic, and the Veronese-Avoiding property is controlled entirely by the base locus of the map $\phi_f$. In particular, for $r=1$ one obtains a rational map from $\PP^{n-2}$ to $\PP^{n-2}$, while for $r=n-1$ the criterion reduces to the single condition that, if $H$ is an equation of the unique hyperplane through the nodes, then $H^{\,m}\notin (J_f)_m$.
\end{Remark}
\section{The parameter space of Veronese-Avoiding hypersurfaces}\label{Pram-Space-VAH}

In this section we study the parameter space of Veronese-Avoiding hypersurfaces of fixed degree.
Fix integers $n\ge 3$ and $d\ge 3$, and let $R=k[x_1,\ldots,x_n]$. Set
\[
T:=n(d-2),\qquad N_d:=\dim \PP(R_d)=\binom{n+d-1}{d}-1.
\]
We denote by
\[
\mathscr V_{n,d}:=
\{\, [f]\in \PP(R_d)\mid V(f)\subseteq \PP_k^{n-1}\text{ is Veronese-Avoiding}\,\}
\]
the parameter space of Veronese-Avoiding hypersurfaces of degree $d$. We also set
\[
\mathscr V_{n,d}^{\mathrm{red}}:=\mathscr V_{n,d}\bigcap \mathscr R_{n,d},
\qquad
\mathscr V_{n,d}^{\mathrm{sm}}:=\mathscr V_{n,d}\bigcap \mathscr U_{n,d},
\]
where $\mathscr R_{n,d}\subseteq \PP(R_d)$ is the open locus of reduced hypersurfaces and $\mathscr U_{n,d}\subseteq \PP(R_d)$ is the open locus of smooth hypersurfaces.

We first describe $\mathscr V_{n,d}$ in terms of a Grassmannian. Set $m:=\dim_k R_{T-1}$ and $s:=\dim_k R_{T-d}$. For each $f\in R_d$, multiplication by the partial derivatives of $f$ induces a linear map
\[
\mu_f:R_{T-d}^{\oplus n}\to R_{T-1},\qquad
(a_1,\ldots,a_n)\mapsto \sum_{i=1}^n a_i\frac{\partial f}{\partial x_i}.
\]
Its image is precisely $(J_f)_{T-1}$. After choosing bases, $\mu_f$ is represented by a matrix
\[
A(f)\in \mathrm{Mat}_{m\times ns}(k),
\]
whose entries depend linearly on the coefficients of $f$.

\begin{Proposition}\label{Prop:rank-stratum-VAH}
	The locus
	\[
	\mathscr C_{n,d}:=
	\{\, [f]\in \PP(R_d)\mid \codim_{\PP(R_{T-1})}\PP((J_f)_{T-1})=n\,\}
	\]
	is a locally closed subvariety of $\PP(R_d)$.
\end{Proposition}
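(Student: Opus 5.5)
The plan is to translate the codimension condition into a rank condition on the matrix $A(f)$. Since the image of $\mu_f$ is exactly $(J_f)_{T-1}$, we have
\[
\codim_{\PP(R_{T-1})}\PP((J_f)_{T-1})=\dim_k R_{T-1}-\rank A(f)=m-\rank A(f).
\]
Hence $[f]\in\mathscr C_{n,d}$ if and only if $\rank A(f)=m-n$. The rank of $A(f)$ does not change when $f$ is rescaled by a nonzero constant, so this condition is well defined on $\PP(R_d)$.

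Next, we write the condition as the intersection of an open and a closed condition. The entries of $A(f)$ are linear forms in the coefficients of $f$, so every $r\times r$ minor of $A(f)$ is a homogeneous polynomial of degree $r$ in those coefficients. Its vanishing on $\PP(R_d)$ is therefore a well-defined closed condition. Let
\[
Z_r:=\{[f]\mid \rank A(f)\le r\}.
\]
Then $Z_r$ is the zero locus of the $(r+1)\times(r+1)$ minors, so it is Zariski closed. We obtain
\[
\mathscr C_{n,d}=Z_{m-n}\setminus Z_{m-n-1},
\]
which is an open subset of the closed set $Z_{m-n}$, hence locally closed. We then give it its reduced structure to call it a subvariety.

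The main point needing care is bookkeeping rather than a genuine obstacle. First, one should check that $T-d\ge 0$, so that the source $R_{T-d}^{\oplus n}$ makes sense. This holds because $T-d=(n-1)d-2n=n(d-2)-d\ge 0$ for $n\ge 3$ and $d\ge 3$; in the edge case $T-d=0$ the argument goes through unchanged. Second, the degenerate cases should be addressed. If $m-n<0$, or if no $f$ attains rank $m-n$, then $\mathscr C_{n,d}$ is empty, which is trivially locally closed. If $m-n=0$, then $Z_{-1}=\emptyset$ by convention.

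Nonemptiness is not needed for the statement. It nevertheless follows from the Fermat example (Example~\ref{ex:Fermat}) or from any smooth $f$, by~\eqref{eq:codim-smooth}. This also shows that $\mathscr C_{n,d}$ is open in $\PP(R_d)$, since smooth $f$ attain the generic maximal rank $m-n$. That remark would be a short addendum: the rank cannot exceed $m-n$ on a dense open set, since smooth hypersurfaces are dense and achieve exactly $m-n$.
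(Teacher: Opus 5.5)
Your argument is correct and is the paper's argument: condition (I) becomes $\rank A(f)=m-n$, a determinantal condition on a matrix whose entries are linear in the coefficients of $f$, and writing $\mathscr C_{n,d}=Z_{m-n}\setminus Z_{m-n-1}$ just makes the paper's one-line justification explicit. Your addendum that $\mathscr C_{n,d}$ is in fact open is also correct: the maximal-rank locus of $A(f)$ is dense open, so it meets the smooth locus, which gives $\rank A(f)\le m-n$ for every $f$ and is exactly the openness tacitly used in Corollary~\ref{genericcase}; just state it as ``the rank never exceeds $m-n$'' rather than ``on a dense open set.''
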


\begin{proof}
	Condition {\rm (I)} in Definition~\ref{VAH} is equivalent to $\dim_k (J_f)_{T-1}=m-n$, that is, to $\rank A(f)=m-n$. Since the entries of $A(f)$ depend linearly on the coefficients of $f$, this is a determinantal rank condition. Hence $\mathscr C_{n,d}$ is locally closed.
\end{proof}

Let $G:=\mathrm{Gr}(m-n,R_{T-1})$ be the Grassmannian of $(m-n)$-dimensional vector subspaces of $R_{T-1}$. On the stratum $\mathscr C_{n,d}$, the image of $\mu_f$ defines a morphism
\[
\gamma_{n,d}:\mathscr C_{n,d}\to G,\qquad [f]\mapsto (J_f)_{T-1}.
\]
Now let
\[
\Sigma_{n,d}:=
\{\,L\in G\mid \PP(L)\bigcap V_n^{T-1}\neq \emptyset\,\}.
\]

\begin{Proposition}\label{Prop:sigma-closed-VAH}
	The subset $\Sigma_{n,d}$ is closed in $G$. In particular,
	\[
	\Omega_{n,d}:=G\setminus \Sigma_{n,d}
	\]
	is a Zariski-open subset of $G$.
\end{Proposition}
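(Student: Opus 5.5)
We will show that $\Sigma_{n,d}$ is closed by exhibiting it as the image, under a proper map, of a closed incidence correspondence. Consider
\[
Z:=\{\,(L,[\ell])\in G\times \PP(R_1)\mid \ell^{T-1}\in L\,\}\subseteq G\times \PP(R_1).
\]
By definition, $\Sigma_{n,d}$ is the image of $Z$ under the first projection $\pi_1:G\times\PP(R_1)\to G$.

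The first step is to check that $Z$ is Zariski-closed. Let $\mathcal S\subseteq G\times R_{T-1}$ be the tautological subbundle on $G$, pulled back to $G\times\PP(R_1)$, and let $\mathcal Q$ be the corresponding universal quotient bundle of rank $n$. The assignment $\ell\mapsto\ell^{T-1}$ defines a nowhere-vanishing section of $\mathcal O_{\PP(R_1)}(T-1)\otimes R_{T-1}$. It is nowhere vanishing because $\ell\neq 0$ implies $\ell^{T-1}\neq 0$ in the domain $R$. Composing this section with the projection to $\mathcal Q\otimes\mathcal O(T-1)$ gives a section of a vector bundle on $G\times\PP(R_1)$, and $Z$ is exactly its zero locus, hence closed.

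The same conclusion can be reached in local coordinates. On a standard affine chart of $G$, the subspace $L$ is the row space of an $(m-n)\times m$ matrix $B_L$ whose entries are regular functions. The condition $\ell^{T-1}\in L$ says that
\[
\rank\begin{bmatrix} B_L\\ v(\ell)\end{bmatrix}\le m-n,
\]
where $v(\ell)$ is the coefficient vector of $\ell^{T-1}$. The entries of $v(\ell)$ are homogeneous of degree $T-1$ in the coordinates of $\ell$. This rank condition is the vanishing of the maximal minors of the matrix above, which are bihomogeneous, so $Z$ is closed on each chart. Equivalently, on the chart where the Plücker coordinate $p_I$ is nonzero, it is the vanishing of the Plücker-type expressions $p_L\wedge \ell^{T-1}=0$ in $\bigwedge^{m-n+1}R_{T-1}$.

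The second step is properness. Since $\PP(R_1)$ is projective, the projection $\pi_1:G\times\PP(R_1)\to G$ is a closed map; this is the main theorem of elimination theory. Therefore $\Sigma_{n,d}=\pi_1(Z)$ is closed in $G$, and $\Omega_{n,d}=G\setminus\Sigma_{n,d}$ is Zariski-open.

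The only step requiring care is the closedness of $Z$, in particular that the condition is well defined on projective classes and glues across the charts of $G$. The wedge formulation $p_L\wedge\ell^{T-1}=0$ handles both points at once, because it is bihomogeneous in the Plücker coordinates of $L$ and in $\ell$. Beyond that, the argument is routine.
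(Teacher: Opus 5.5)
Your proof is correct and follows the paper's argument: the paper also realizes $\Sigma_{n,d}$ as the image of a closed incidence correspondence under a projection that is proper because the second factor is projective. The only differences are cosmetic. The paper places the incidence inside $G\times V_n^{T-1}$ rather than $G\times \PP(R_1)$ (the two are isomorphic via the Veronese embedding) and simply asserts that it is closed, whereas your Pl\"ucker condition $p_L\wedge \ell^{T-1}=0$ justifies that closedness explicitly.
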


\begin{proof}
	Consider the incidence correspondence
	\[
	\mathcal I:=
	\{\, (L,p)\in G\times V_n^{T-1}\mid p\in \PP(L)\,\}.
	\]
	This is closed in $G\times V_n^{T-1}$. Since $V_n^{T-1}$ is projective, the projection $\pi_1:\mathcal I\to G$ is proper. Therefore its image $\pi_1(\mathcal I)=\Sigma_{n,d}$ is closed.
\end{proof}

\begin{Corollary}\label{cor:locally-closed-VAH}
	The parameter space $\mathscr V_{n,d}$ is a locally closed subvariety of $\PP(R_d)$. More precisely,
	\[
	\mathscr V_{n,d}=\gamma_{n,d}^{-1}(\Omega_{n,d})\subseteq \mathscr C_{n,d}.
	\]
	In particular, condition {\rm (II)} is open on the stratum where condition {\rm (I)} holds.
\end{Corollary}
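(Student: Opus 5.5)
The plan is to combine Proposition~\ref{Prop:rank-stratum-VAH} and Proposition~\ref{Prop:sigma-closed-VAH} through the morphism $\gamma_{n,d}$. Unwinding Definition~\ref{VAH}, a class $[f]\in\PP(R_d)$ lies in $\mathscr V_{n,d}$ exactly when two things hold. First, condition (I) holds, which means $[f]\in\mathscr C_{n,d}$. Second, condition (II) holds, which means $\PP((J_f)_{T-1})\cap V_n^{T-1}=\emptyset$. Since $\gamma_{n,d}([f])=(J_f)_{T-1}$ on $\mathscr C_{n,d}$, condition (II) says precisely that $\gamma_{n,d}([f])\notin\Sigma_{n,d}$, that is, $\gamma_{n,d}([f])\in\Omega_{n,d}$. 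This gives the set-theoretic identity $\mathscr V_{n,d}=\gamma_{n,d}^{-1}(\Omega_{n,d})$.

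Here I would note one point about the definition. It is stated for reduced $f$, but the conditions themselves make sense for every $[f]$. I would therefore read $\mathscr V_{n,d}$ as the locus cut out by (I) and (II). If reducedness is imposed, one intersects with the open set $\mathscr R_{n,d}$, and local closedness is unaffected.

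Next I must check that $\gamma_{n,d}$ really is a morphism of varieties $\mathscr C_{n,d}\to G$; this is the only step needing care. On $\mathscr C_{n,d}$ the matrix $A(f)$ has constant rank $m-n$, with entries linear in the coefficients of $f$. Locally on $\mathscr C_{n,d}$ one can choose $m-n$ columns whose $(m-n)\times(m-n)$ minor does not vanish. On that open set these columns span the image, so they give a regular map to the Stiefel variety, and hence to $G$ via Pl\"ucker coordinates. Equivalently, the image sheaf of a constant-rank map of trivial bundles is a subbundle. Scaling $f$ does not change the image, so the construction descends to the projective space. The local descriptions agree on overlaps because all of them give the same subspace $(J_f)_{T-1}$, so they glue to a morphism.

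Finally, $\Omega_{n,d}$ is open in $G$ by Proposition~\ref{Prop:sigma-closed-VAH}, and $\gamma_{n,d}$ is continuous. Hence $\gamma_{n,d}^{-1}(\Omega_{n,d})$ is open in $\mathscr C_{n,d}$, which is the claim that (II) is open on the stratum where (I) holds. An open subset of a locally closed subset is locally closed, and $\mathscr C_{n,d}$ is locally closed by Proposition~\ref{Prop:rank-stratum-VAH}. Therefore $\mathscr V_{n,d}$ is locally closed in $\PP(R_d)$, and we give it the reduced induced structure.
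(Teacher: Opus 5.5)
Your proof is correct and follows the same route as the paper, which simply combines Proposition~\ref{Prop:rank-stratum-VAH} and Proposition~\ref{Prop:sigma-closed-VAH} by pulling back the open set $\Omega_{n,d}$ along $\gamma_{n,d}$. You add two things the paper leaves implicit: a verification that $\gamma_{n,d}$ is a morphism, via locally nonvanishing minors and Pl\"ucker coordinates, and the observation that $\mathscr V_{n,d}$ should be read as the locus cut out by {\rm (I)} and {\rm (II)}, which is consistent with the paper's separate definition of $\mathscr V_{n,d}^{\mathrm{red}}$.
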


\begin{proof}
	This follows immediately from Propositions~\ref{Prop:rank-stratum-VAH} and~\ref{Prop:sigma-closed-VAH}.
\end{proof}

\begin{Remark}
	The variety $\mathscr V_{n,d}$ is stable under the natural action of $PGL_n$ on $\PP(R_d)$. Indeed, both the rank condition defining $\mathscr C_{n,d}$ and the avoidance condition defining $\Omega_{n,d}$ are invariant under projective changes of coordinates.
\end{Remark}

We now study the part of $\mathscr V_{n,d}$ coming from hypersurfaces with exactly $n$ nodes. Let $U_n\subseteq (\PP_k^{n-1})^n$ be the open subset of ordered $n$-tuples $(p_1,\ldots,p_n)$ in general linear position. For $\underline p=(p_1,\ldots,p_n)\in U_n$, set $Z_{\underline p}:=\{p_1,\ldots,p_n\}\subseteq \PP_k^{n-1}$. Consider the incidence variety
\[
\mathcal L_{n,d}:=
\{\,([f],\underline p)\in \PP(R_d)\times U_n \mid f\in H^0(\PP_k^{n-1},\mathcal I_{Z_{\underline p}}^2(d))\,\}.
\]
Thus $\mathcal L_{n,d}$ parametrizes degree-$d$ hypersurfaces singular at the ordered set of points $\underline p$.

\begin{Lemma}\label{lem:linear-system-dim}
	For every $\underline p\in U_n$, one has
	\[
	\dim \PP\bigl(H^0(\PP_k^{n-1},\mathcal I_{Z_{\underline p}}^2(d))\bigr)=N_d-n^2.
	\]
	Consequently, $\mathcal L_{n,d}$ is an irreducible projective bundle over $U_n$ of dimension
	\[
	\dim \mathcal L_{n,d}=N_d-n.
	\]
\end{Lemma}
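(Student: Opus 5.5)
We reduce to the coordinate points using the transitive action of $PGL_n$ on $U_n$, and then count independent conditions directly. First, $PGL_n$ acts transitively on ordered $n$-tuples of points in general linear position. Any $g\in PGL_n$ induces a linear automorphism of $R_d$ sending $H^0(\mathcal I_{Z_{\underline p}}^2(d))$ isomorphically onto $H^0(\mathcal I_{Z_{g\underline p}}^2(d))$. So it suffices to treat $\underline p=(e_1,\ldots,e_n)$, the coordinate points.

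For $p_i=e_i$, a form $f\in R_d$ lies in $I(p_i)^2$ if and only if it is singular at $e_i$. This means the coefficients of the monomials $x_i^d$ and $x_i^{d-1}x_j$ ($j\neq i$) vanish, which gives $n$ linear conditions at each point. We then check that these $n^2$ monomials are pairwise distinct. The monomials $x_i^d$ and $x_i^{d-1}x_j$ for different $i$ cannot coincide, because $d-1\ge 2>1$: the variable carrying exponent $\ge d-1$ determines $i$. Hence $H^0(\mathcal I_{Z}^2(d))=\bigcap_i I(e_i)^2_d$ is spanned by the remaining monomials, and it has dimension $\binom{n+d-1}{d}-n^2$. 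Its projectivization therefore has dimension $N_d-n^2$. Here we use that $I(Z)^{2}$ and $\bigcap I(p_i)^2$ agree in degree $d$ as sections of $\mathcal I_Z^2(d)$, which holds since the ideal sheaf is local.

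For the bundle statement, consider the vector bundle $\mathcal E\subseteq U_n\times R_d$ whose fiber over $\underline p$ is $H^0(\mathcal I_{Z_{\underline p}}^2(d))$. It is the kernel of the evaluation map of first-order jets $U_n\times R_d\to\bigoplus_i J^1(\mathcal O(d))_{p_i}$. This map is surjective on every fiber by the computation above, since it has constant rank $n^2$. A kernel of a constant-rank morphism of vector bundles is a vector bundle, so $\mathcal L_{n,d}=\PP(\mathcal E)$ is a projective bundle over $U_n$. Alternatively, local triviality follows from the $PGL_n$-homogeneity: locally on $U_n$, choose sections $g(\underline p)$ of the orbit map $PGL_n\to U_n$ and transport the coordinate fiber. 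The space $U_n$ is a nonempty open subset of the irreducible variety $(\PP^{n-1})^n$, so it is irreducible of dimension $n(n-1)$. A projective bundle over an irreducible base is irreducible, and so
\[
\dim\mathcal L_{n,d}=n(n-1)+N_d-n^2=N_d-n.
\]

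The main obstacle is the global structure, namely making the projective-bundle claim rigorous. The constant-rank jet-evaluation argument handles this cleanly. The fiber count itself is routine once the distinctness of the $n^2$ monomials is noted, and this is where $d\ge 3$ enters.
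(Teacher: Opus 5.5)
Your proof is correct and follows the paper's route: you reduce by projective equivalence to the coordinate points, note that singularity there kills exactly the $n^2$ distinct monomials $x_i^d$ and $x_i^{d-1}x_j$, and add the fiber dimension $N_d-n^2$ to $\dim U_n=n(n-1)$. Your distinctness check and constant-rank jet argument make explicit what the paper leaves implicit; one wording slip is that $I(Z)^2$ and $\bigcap_i I(p_i)^2$ need not agree in degree $d$ as ideals (for coordinate points and $d=3$, $x_1x_2x_3$ lies in the latter while $I(Z)^2$ starts in degree $4$), but you only use the correct identity $H^0(\PP_k^{n-1},\mathcal I_Z^2(d))=\bigl(\bigcap_i I(p_i)^2\bigr)_d$.
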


\begin{proof}
	By projective equivalence, it is enough to consider the case where $Z_{\underline p}$ is the set of coordinate points. Then a form of degree $d$ is singular at all these points if and only if the coefficients of the monomials $x_i^d$ and $x_i^{d-1}x_j$ for $i\neq j$ all vanish. These are exactly $n+n(n-1)=n^2$ independent linear conditions. Hence
	\[
	\dim \PP\bigl(H^0(\PP_k^{n-1},\mathcal I_{Z_{\underline p}}^2(d))\bigr)=N_d-n^2.
	\]
	Since $U_n$ is irreducible of dimension $n(n-1)$, the conclusion follows.
\end{proof}

Let $\mathcal N_{n,d}\subseteq \mathcal L_{n,d}$ be the subset of pairs $([f],\underline p)$ such that $V(f)$ has an ordinary double point at each point of $\underline p$ and is smooth away from $Z_{\underline p}$.

\begin{Proposition}\label{prop:nodal-incidence}
	The subset $\mathcal N_{n,d}$ is a nonempty open subset of $\mathcal L_{n,d}$. In particular, $\mathcal N_{n,d}$ is irreducible of dimension
	\[
	\dim \mathcal N_{n,d}=N_d-n.
	\]
\end{Proposition}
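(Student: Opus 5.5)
We must show that $\mathcal N_{n,d}$ is open in $\mathcal L_{n,d}$ and nonempty. Irreducibility and the dimension count then follow from Lemma~\ref{lem:linear-system-dim}, since a nonempty open subset of an irreducible variety is irreducible of the same dimension, namely $N_d-n$.

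\emph{Openness.} We show that the complement $\mathcal L_{n,d}\setminus\mathcal N_{n,d}$ is closed, splitting it into two conditions.

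First, a point $([f],\underline p)$ fails to be nodal at $p_i$ exactly when the Hessian of $f$ at $p_i$ is degenerate on the tangent directions. In local coordinates the quadratic part of $f$ at $p_i$ must be nondegenerate. Since $f$ is singular at $p_i$ and $d\ge 3$, Euler's relation shows that $p_i$ lies in the kernel of the full Hessian matrix $H(f)(p_i)$. The node condition is therefore equivalent to $\rank H(f)(p_i)=n-1$. Over $\PP(R_d)\times U_n$ this rank condition is open, because some $(n-1)\times(n-1)$ minor of the Hessian evaluated at $p_i$ is nonzero. Such a minor is a polynomial in the coefficients of $f$ and the coordinates of $p_i$, homogeneous in each set of variables. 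This handles each of the $n$ points.

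Second, consider the locus where $V(f)$ has a singular point outside $Z_{\underline p}$. Form the incidence
\[
\mathcal S=\{([f],\underline p,q)\in \mathcal L_{n,d}\times\PP^{n-1}_k \mid \partial_i f(q)=0\ \text{for all } i\}.
\]
This set is closed, and the points $\{q=p_i\}$ form closed components of it. The difficulty is that the projection of $\mathcal S$ minus the diagonals need not be closed, since extra singular points may collide with some $p_i$. However, on the open set where each $p_i$ is already a node, $p_i$ is an isolated reduced point of the singular scheme. By upper semicontinuity of the local length (the Tjurina number), no extra singular point can specialize to $p_i$ within that open set. Over the node-open set, the residual closed subscheme $\mathcal S'$ obtained by removing the diagonal components $\{q=p_i\}$ is therefore closed, and its image under the proper map $\mathcal S'\to\mathcal L_{n,d}$ is closed. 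Removing this image leaves exactly $\mathcal N_{n,d}$, so $\mathcal N_{n,d}$ is open. I expect this collision issue to be the main technical obstacle, and I would handle it precisely through the semicontinuity argument just described.

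\emph{Nonemptiness.} Since $U_n$ is a single $PGL_n$-orbit, it suffices to exhibit one $f$ with nodes exactly at the coordinate points and smooth elsewhere. For $d=3$, consider $f_0=\sum_{i<j<k}x_ix_jx_k$. For $d\ge4$, consider $f_0=\sum_{i<j}(x_i^{d-2}x_j^2+x_i^2x_j^{d-2})$. These are the Alper--Isaev forms recalled in the Remark after Theorem~\ref{thm:main-singular}, where it is stated that the coordinate points are ordinary nodes of $V(f_0)$.

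Rather than verifying smoothness of $f_0$ elsewhere by hand, I would use a Bertini argument. The linear system $|\mathcal I_{Z}^2(d)|$, with $Z$ the set of coordinate points, is base-point free away from $Z$ when $d\ge3$. Indeed, it contains products such as $x_ix_j\,g$ and, more generally, enough monomials to separate points outside $Z$. By Bertini's theorem in characteristic zero, a general member is therefore smooth outside $Z$. Being a node at each $p_i$ is an open condition in this linear system, and $f_0$ shows that it is nonempty. Hence a general member of $|\mathcal I_Z^2(d)|$ is nodal at $Z$ and smooth elsewhere, which shows $\mathcal N_{n,d}\neq\emptyset$ and completes the proof.
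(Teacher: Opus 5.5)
Your route is essentially the paper's: Bertini on the fiber $|\mathcal I_{Z}^2(d)|$ over the coordinate points, openness and nonemptiness of the node condition, then irreducibility and dimension from Lemma~\ref{lem:linear-system-dim}. Your openness argument is more careful than the paper's. It uses the Hessian-rank criterion, semicontinuity to stop extra singular points from colliding with the nodes, and then a proper projection. The paper simply passes from ``open on every fiber'' to ``open in $\mathcal L_{n,d}$'' without comment.

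\textbf{The gap.} The step that fails is the claim that $|\mathcal I_Z^2(d)|$ is base-point free away from $Z$ for every $d\ge 3$. For $d=3$ the excluded monomials $x_i^3$ and $x_i^2x_j$ leave only the squarefree monomials $x_ix_jx_k$ with $i<j<k$. In particular $x_ix_j\,g$ is not in the system when $g\in\{x_i,x_j\}$. Moreover, every cubic singular at $p_i$ and $p_j$ contains the line $\overline{p_ip_j}$: its restriction to that line is a binary cubic divisible by $s^2t^2$, hence zero. So the base locus is $\bigcup_{i<j}\overline{p_ip_j}$, and Bertini only gives smoothness of a general member off these $\binom n2$ lines. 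The paper's proof makes the same assertion, that the base locus is exactly $Z_{\underline p}$, so it needs the same patch. For $d\ge 4$ the claim is correct, since $x_a^{d-2}x_b^2$ lies in the system and is nonzero at any point with two nonzero coordinates $a,b$.

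\textbf{The repair.} Take $f=\sum_{i<j<k}c_{ijk}x_ix_jx_k$ and a point of $\overline{p_ip_j}\setminus Z$ whose only nonzero coordinates are $s$ (in position $i$) and $t$ (in position $j$). Then $\partial f/\partial x_k=c_{ijk}\,st$ for every $k\notin\{i,j\}$, which is nonzero once $c_{ijk}\neq 0$; such a $k$ exists because $n\ge 3$. Intersecting this dense open condition with the Bertini open set and the node condition produces a member that is nodal at $Z$ and smooth elsewhere.

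Alternatively, you can check directly that your $f_0=e_3$ already works. One has $\partial e_3/\partial x_i=x_i^2-e_1x_i+e_2$, so at a singular point every coordinate is a root of $t^2-e_1t+e_2$. A short count then shows that for $n\ge 3$ only the coordinate points survive.

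Using $f_0$ to witness the node condition is fine. It is in fact safer than the paper's claim that the quadratic part at $p_i$ is an arbitrary quadratic form, which also fails for $d=3$ because $x_1x_i^2$ is excluded from the system.
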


\begin{proof}
	Fix $\underline p\in U_n$, and by projective equivalence assume that $Z_{\underline p}$ is the set of coordinate points. The fiber of $\mathcal L_{n,d}\to U_n$ over $\underline p$ is the linear system $\PP(H^0(\PP_k^{n-1},\mathcal I_{Z_{\underline p}}^2(d)))$.
	
	The base locus of this system is exactly $Z_{\underline p}$, hence Bertini's theorem implies that a general member is smooth away from $Z_{\underline p}$.
	
	It remains to analyze the singularities at the points of $Z_{\underline p}$. Fix one of them, say $p=[1:0:\cdots:0]$, and use affine coordinates $T_i=x_i/x_1$ for $i=2,\ldots,n$. A form in $H^0(\PP_k^{n-1},\mathcal I_{Z_{\underline p}}^2(d))$ has no constant or linear terms in these coordinates, and its quadratic part is an arbitrary quadratic form in $T_2,\ldots,T_n$, since the monomials $x_1^{d-2}x_ix_j$ belong to the linear system for all $2\le i,j\le n$. Therefore, for a general member, the quadratic part at $p$ is nondegenerate, so $p$ is an ordinary double point. The same argument applies at every point of $Z_{\underline p}$.
	
	Thus, on every fiber, the desired condition defines a nonempty open subset. Hence $\mathcal N_{n,d}$ is a nonempty open subset of $\mathcal L_{n,d}$. The irreducibility and dimension statement follow from Lemma~\ref{lem:linear-system-dim}.
\end{proof}

Let $\mathscr N_{n,d}\subseteq \PP(R_d)$ be the image of $\mathcal N_{n,d}$ under the projection $\pi:\mathcal N_{n,d}\to \PP(R_d)$. Thus $\mathscr N_{n,d}$ is the locus of reduced hypersurfaces of degree $d$ with exactly $n$ nodes in general linear position.

\begin{Proposition}\label{prop:nodal-locus-irred}
	The locus $\mathscr N_{n,d}$ is a nonempty irreducible constructible subset of $\PP(R_d)$ of dimension
	\[
	\dim \mathscr N_{n,d}=N_d-n.
	\]
\end{Proposition}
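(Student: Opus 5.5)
The plan is to deduce everything from Proposition~\ref{prop:nodal-incidence} by studying the projection $\pi:\mathcal N_{n,d}\to\PP(R_d)$. Nonemptiness is immediate, since $\mathcal N_{n,d}\neq\emptyset$. Irreducibility also follows at once: $\mathscr N_{n,d}=\pi(\mathcal N_{n,d})$ is the image of an irreducible set under a continuous map. Constructibility comes from Chevalley's theorem, because $\pi$ is the restriction to the locally closed subset $\mathcal N_{n,d}$ of the projection $\PP(R_d)\times U_n\to\PP(R_d)$, which is a morphism of varieties. I would first check that $\mathcal N_{n,d}$ is locally closed in $\PP(R_d)\times (\PP_k^{n-1})^n$. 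This holds because it is open in $\mathcal L_{n,d}$, and $\mathcal L_{n,d}$ is closed in $\PP(R_d)\times U_n$, with $U_n$ open.

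The dimension count is the main point. I will show that $\pi$ has finite fibers, so that $\dim\mathscr N_{n,d}=\dim\mathcal N_{n,d}=N_d-n$. Let $[f]\in\mathscr N_{n,d}$ and suppose $([f],\underline p)\in\mathcal N_{n,d}$. By definition $V(f)$ is singular exactly at $Z_{\underline p}$, because it is nodal there and smooth elsewhere. Hence $Z_{\underline p}=\Sing(V(f))$ is determined by $f$. The ordered tuple $\underline p$ is then one of the $n!$ orderings of this set, so every fiber of $\pi$ has at most $n!$ points, and in fact exactly $n!$. A dominant morphism $\mathcal N_{n,d}\to\overline{\mathscr N_{n,d}}$ with finite fibers preserves dimension, by the fiber dimension theorem. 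Since a constructible set has the same dimension as its closure, we get $\dim\mathscr N_{n,d}=N_d-n$.

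I expect the only delicate step to be the finiteness of the fibers, specifically the requirement that the singular locus equal $Z_{\underline p}$ exactly. This is built into the definition of $\mathcal N_{n,d}$, which demands smoothness away from $Z_{\underline p}$. I would also confirm reducedness of the hypersurfaces in $\mathscr N_{n,d}$. A nonreduced $f$ is singular along a hypersurface, which is infinite because $n\ge 3$. This contradicts the singular locus being finite, so every member of $\mathscr N_{n,d}$ is reduced, matching the description of $\mathscr N_{n,d}$ as a locus of reduced hypersurfaces.
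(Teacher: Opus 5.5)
Your proposal is correct and follows essentially the same route as the paper. Irreducibility comes from $\mathscr N_{n,d}$ being the image of the irreducible $\mathcal N_{n,d}$, and the dimension comes from the fibers of $\pi$ being the $n!$ orderings of $\Sing(V(f))$. You also spell out the Chevalley constructibility and reducedness checks, which the paper leaves implicit.
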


\begin{proof}
	Since $\mathcal N_{n,d}$ is irreducible, so is its image $\mathscr N_{n,d}$. If $[f]\in \mathscr N_{n,d}$, then its singular set consists of exactly $n$ points, so the fiber of $\pi$ over $[f]$ is finite: it consists precisely of the $n!$ orderings of the singular points of $V(f)$. Hence $\pi$ is generically finite onto its image, and therefore
	\[
	\dim \mathscr N_{n,d}=\dim \mathcal N_{n,d}=N_d-n.
	\]
\end{proof}

The nodal constructions above have immediate consequences for the geometry of the Veronese-Avoiding locus.

\begin{Theorem}\label{thm:nodal-stratum-in-VAH}
	For every $n\ge 3$ and $d\ge 3$, one has
	\[
	\mathscr N_{n,d}\subseteq \mathscr V_{n,d}^{\mathrm{red}}.
	\]
	In particular, $\mathscr V_{n,d}^{\mathrm{red}}$ is nonempty.
\end{Theorem}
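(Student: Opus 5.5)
The inclusion will follow directly from Theorem~\ref{thm:main-singular}, once we check that every member of $\mathscr N_{n,d}$ satisfies its hypotheses. Nonemptiness will then come from Proposition~\ref{prop:nodal-locus-irred}.

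First fix $[f]\in\mathscr N_{n,d}$. By construction there is an ordered tuple $\underline p=(p_1,\ldots,p_n)\in U_n$ with $([f],\underline p)\in\mathcal N_{n,d}$. This means $V(f)$ has an ordinary double point at each $p_i$ and is smooth away from $Z_{\underline p}$. So $X=V(f)$ has only isolated singularities, and $\Sing(X)=Z_{\underline p}$ consists of exactly $n$ points. These points are in general linear position because $\underline p\in U_n$.

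Next we check that $f$ is reduced, which Theorem~\ref{thm:main-singular} and the definition of $\mathscr V_{n,d}^{\mathrm{red}}$ both require. Suppose $f=g^2h$ with $g$ nonconstant. Then $V(f)$ is singular along all of $V(g)$. Since $n\ge 3$, $V(g)$ is a hypersurface of positive dimension in $\PP_k^{n-1}$, hence infinite. This contradicts the finiteness of $\Sing(X)$. Therefore $[f]\in\mathscr R_{n,d}$.

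Now Theorem~\ref{thm:main-singular}, direction (b)$\Rightarrow$(a), applies and shows that $X$ is Veronese-Avoiding. Hence $[f]\in\mathscr V_{n,d}\cap\mathscr R_{n,d}=\mathscr V_{n,d}^{\mathrm{red}}$. Finally, $\mathscr N_{n,d}\neq\emptyset$ by Proposition~\ref{prop:nodal-locus-irred}, which is ultimately the Bertini argument of Proposition~\ref{prop:nodal-incidence}. So $\mathscr V_{n,d}^{\mathrm{red}}$ is nonempty.

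The only step needing care is the reducedness check, and the isolated-singularity argument above handles it. Everything else is a formal combination of results already established.
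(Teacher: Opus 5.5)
Your proposal is correct and follows the same route as the paper: the inclusion comes from direction (b)$\Rightarrow$(a) of Theorem~\ref{thm:main-singular}, and nonemptiness comes from Proposition~\ref{prop:nodal-locus-irred}. Your explicit reducedness check, which uses that $V(g)$ is infinite when $n\ge 3$, fills in a point the paper takes for granted when it describes $\mathscr N_{n,d}$ as a locus of reduced hypersurfaces.
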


\begin{proof}
	Every hypersurface in $\mathscr N_{n,d}$ has exactly $n$ nodes in general linear position. Hence it is Veronese-Avoiding by Theorem~\ref{thm:main-singular}. The nonemptiness follows from Proposition~\ref{prop:nodal-locus-irred}.
\end{proof}

\begin{Corollary}\label{cor:smooth-VAH-nonempty}
	For every $n\ge 3$ and $d\ge 3$, the smooth Veronese-Avoiding locus $\mathscr V_{n,d}^{\mathrm{sm}}$ is nonempty.
\end{Corollary}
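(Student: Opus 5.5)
The plan is to deform a nodal Veronese-Avoiding hypersurface to a smooth one and use the openness of the Veronese-Avoiding condition near it. By Theorem~\ref{thm:nodal-stratum-in-VAH} and Proposition~\ref{prop:nodal-locus-irred}, there is a point $[f_0]\in\mathscr N_{n,d}\subseteq\mathscr V_{n,d}$. For instance, the Alper--Isaev form, or a general member of the linear system singular at the coordinate points, has $n$ nodes in general linear position. Smoothness is open and nonempty in $\PP(R_d)$, so $\mathscr U_{n,d}$ is dense. It therefore suffices to show that $\mathscr V_{n,d}$ contains an open neighbourhood of $[f_0]$ in $\PP(R_d)$. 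Such a neighbourhood meets $\mathscr U_{n,d}$, which gives a point of $\mathscr V^{\mathrm{sm}}_{n,d}$.

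By Corollary~\ref{cor:locally-closed-VAH}, condition (II) is open on the stratum $\mathscr C_{n,d}$ where condition (I) holds. So the key step is to see that condition (I) is also open near $f_0$, and not merely locally closed. Write $\rho(f)=\rank A(f)=\dim_k(J_f)_{T-1}$. This function is lower semicontinuous, so $\{\rho\ge m-n\}$ is open. I claim that $\rho(f)\le m-n$ for every $f$, so that $\mathscr C_{n,d}=\{\rho\ge m-n\}$ is open. One way to see this: for a smooth $f$ we have $\rho(f)=m-n$ by \eqref{eq:codim-smooth}, and smooth forms are dense. Lower semicontinuity, however, only gives $\rho\ge$ its generic value on an open set, so density alone does not bound $\rho$ from above.

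Instead I would argue directly: $\rho(f)$ is the rank of a matrix with entries linear in $f$, and its generic value equals the maximum rank over $\PP(R_d)$. Since the smooth forms are a dense open set on which the rank is $m-n$, the generic rank is $m-n$. The maximal rank of a family of matrices is attained on a nonempty open set, and that set meets the smooth locus, so the maximal rank is $m-n$. Hence $\rho\le m-n$ everywhere, and $\mathscr C_{n,d}=\{\rho=m-n\}=\{\rho\ge m-n\}$ is open in $\PP(R_d)$. This step is the main obstacle, and it is settled by the remark that the generic rank of a linear family of matrices is its maximal rank.

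Finally, $\gamma_{n,d}$ is a morphism on the open set $\mathscr C_{n,d}$ and $\Omega_{n,d}$ is open by Proposition~\ref{Prop:sigma-closed-VAH}. So $\mathscr V_{n,d}=\gamma_{n,d}^{-1}(\Omega_{n,d})$ is open in $\PP(R_d)$, and it is nonempty since it contains $[f_0]$. A nonempty open set in the irreducible space $\PP(R_d)$ meets the dense open set $\mathscr U_{n,d}$. Hence $\mathscr V^{\mathrm{sm}}_{n,d}\neq\emptyset$, and in fact it is dense in $\PP(R_d)$. As an alternative to this argument, one may invoke \cite[Lemma~4.5]{Alper-Isaev1} directly.
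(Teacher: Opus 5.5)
Your argument is correct, but it takes a different route from the paper. The paper's proof is two lines. Theorem~\ref{thm:nodal-stratum-in-VAH} gives a nodal point of $\mathscr V^{\mathrm{red}}_{n,d}$, and then \cite[Lemma~4.5]{Alper-Isaev1} is cited as a black box to pass from some Veronese-Avoiding form of degree $d$ to a smooth one.

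You prove that passage directly by strengthening Corollary~\ref{cor:locally-closed-VAH}. Your key observation is that $\dim_k(J_f)_{T-1}\le m-n$ for every $f$. The reason is that the maximal rank of the linear family $A(f)$ is attained on a nonempty open set, and that set must meet the smooth locus, where the rank is $m-n$. It follows that $\mathscr C_{n,d}$, and hence $\mathscr V_{n,d}=\gamma_{n,d}^{-1}(\Omega_{n,d})$, is open in $\PP(R_d)$, not merely locally closed.

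This approach has three benefits:
\begin{itemize}
\item It makes the corollary self-contained.
\item It gives a stronger conclusion: $\mathscr V^{\mathrm{sm}}_{n,d}$ is a dense open subset of $\PP(R_d)$. So a general form of degree $d$ in any number of variables is Veronese-Avoiding, which extends Corollary~\ref{genericcase} beyond plane curves.
\item It supplies a proof of the openness of the codimension condition. The paper asserts this without proof in Corollary~\ref{genericcase}, while its own Proposition~\ref{Prop:rank-stratum-VAH} only claims local closedness.
\end{itemize}
The paper's route is shorter, but it depends entirely on the external lemma.

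One fix to your write-up: delete the sentence claiming that lower semicontinuity together with density ``does not bound $\rho$ from above.'' It is wrong as stated. Lower semicontinuity of rank combined with density of the smooth locus is exactly what gives $\rho\le m-n$, and it is precisely the argument you give in the next paragraph.
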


\begin{proof}
	By Theorem~\ref{thm:nodal-stratum-in-VAH}, the set $\mathscr V_{n,d}^{\mathrm{red}}$ is nonempty. By~\cite[Lemma~4.5]{Alper-Isaev1}, the existence of a Veronese-Avoiding hypersurface of degree $d$ implies the existence of a smooth Veronese-Avoiding hypersurface of the same degree. Hence $\mathscr V_{n,d}^{\mathrm{sm}}\neq \emptyset$.
\end{proof}

\begin{Corollary}\label{cor:component-lower-bound}
	The variety $\mathscr V_{n,d}$ has an irreducible component of dimension at least
	\[
	N_d-n=\binom{n+d-1}{d}-1-n.
	\]
\end{Corollary}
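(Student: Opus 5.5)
The plan is to use the irreducible nodal locus $\mathscr N_{n,d}$ from Proposition~\ref{prop:nodal-locus-irred}, which already sits inside $\mathscr V_{n,d}$ by Theorem~\ref{thm:nodal-stratum-in-VAH}. Since $\mathscr V_{n,d}$ is locally closed in $\PP(R_d)$ by Corollary~\ref{cor:locally-closed-VAH}, it is a quasi-projective variety, hence a Noetherian topological space. It therefore has finitely many irreducible components $W_1,\dots,W_s$, each closed in $\mathscr V_{n,d}$, with $\mathscr V_{n,d}=W_1\cup\cdots\cup W_s$.

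First, since $\mathscr N_{n,d}$ is irreducible and contained in $\mathscr V_{n,d}$, its closure $\overline{\mathscr N_{n,d}}^{\mathscr V}$ in $\mathscr V_{n,d}$ is an irreducible closed subset. Hence it lies in a single component, say $W_1$. The point is that an irreducible set covered by finitely many closed sets lies in one of them.

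Next, compare dimensions. For a constructible set, dimension means the dimension of its Zariski closure, and taking closure in $\mathscr V_{n,d}$ versus in $\PP(R_d)$ gives the same dimension, because $\mathscr V_{n,d}$ is open in its closure. Thus
\[
\dim W_1\ \ge\ \dim \overline{\mathscr N_{n,d}}^{\mathscr V}=\dim \mathscr N_{n,d}=N_d-n=\binom{n+d-1}{d}-1-n,
\]
where the last equalities are Proposition~\ref{prop:nodal-locus-irred}. So $W_1$ is the desired component.

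The main subtlety is bookkeeping. One has to note that $\mathscr N_{n,d}$ is only constructible, not locally closed, so its dimension should be read as that of its closure. One also has to check that irreducibility passes to closures inside the locally closed set $\mathscr V_{n,d}$. Both are standard facts, so no genuine obstacle is expected.
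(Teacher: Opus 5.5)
Your argument is correct and is the same as the paper's: $\mathscr N_{n,d}$ is irreducible of dimension $N_d-n$ (Proposition~\ref{prop:nodal-locus-irred}) and lies in $\mathscr V_{n,d}$ (Theorem~\ref{thm:nodal-stratum-in-VAH}), so it is contained in some irreducible component of $\mathscr V_{n,d}$, which therefore has dimension at least $N_d-n$. Your extra bookkeeping makes explicit what the paper's one-line proof leaves implicit: Noetherianity comes from local closedness, and the closure of $\mathscr N_{n,d}$ in $\mathscr V_{n,d}$ is a nonempty open subset of its closure in $\PP(R_d)$, hence has the same dimension.
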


\begin{proof}
	By Theorem~\ref{thm:nodal-stratum-in-VAH}, the irreducible set $\mathscr N_{n,d}$ is contained in $\mathscr V_{n,d}$. Hence some irreducible component of $\mathscr V_{n,d}$ contains $\mathscr N_{n,d}$, and therefore has dimension at least $\dim \mathscr N_{n,d}=N_d-n$.
\end{proof}

The next result describes the Veronese-Avoiding locus on the stratum of hypersurfaces with exactly $n$ singular points.

\begin{Theorem}\label{thm:classification-on-n-point-stratum}
	On the locus of reduced hypersurfaces of degree $d\ge 3$ with exactly $n$ isolated singular points, the Veronese-Avoiding condition is equivalent to having $n$ nodes in general linear position. Equivalently,
	\[
	\mathscr V_{n,d}\bigcap
	\{\, [f]\in \PP(R_d)\mid |\Sing(V(f))|=n \,\}
	=
	\mathscr N_{n,d}.
	\]
\end{Theorem}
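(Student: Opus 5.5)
The plan is to prove the set-theoretic equality by double inclusion, reducing each direction to Theorem~\ref{thm:main-singular} and to the definition of $\mathscr N_{n,d}$ as the image of $\mathcal N_{n,d}$. Throughout, we work on the locus of reduced hypersurfaces with exactly $n$ isolated singular points. Since Definition~\ref{VAH} only applies to reduced hypersurfaces, we read the left-hand side as consisting of reduced $[f]$. We also note that a finite singular set automatically consists of isolated singularities, so the hypotheses of Theorem~\ref{thm:main-singular} are met for every such $[f]$.

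For the inclusion $\subseteq$, take $[f]\in\mathscr V_{n,d}$ with $|\Sing(V(f))|=n$. Theorem~\ref{thm:main-singular}, direction (a)$\Rightarrow$(b), shows that $\Sing(V(f))=\{p_1,\dots,p_n\}$ consists of ordinary nodes in general linear position. Choose any ordering $\underline p=(p_1,\dots,p_n)$. Then $\underline p\in U_n$. Because $f$ is singular at each $p_i$, we have $f\in H^0(\PP_k^{n-1},\mathcal I_{Z_{\underline p}}^2(d))$, so $([f],\underline p)\in\mathcal L_{n,d}$. Since each $p_i$ is an ordinary double point and $V(f)$ is smooth away from $Z_{\underline p}$, the pair actually lies in $\mathcal N_{n,d}$. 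Hence $[f]=\pi([f],\underline p)\in\mathscr N_{n,d}$.

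For the inclusion $\supseteq$, take $[f]\in\mathscr N_{n,d}$, so $[f]=\pi([f],\underline p)$ for some $([f],\underline p)\in\mathcal N_{n,d}$. By definition, $V(f)$ has ordinary double points exactly at the $n$ points of $Z_{\underline p}$, which are in general linear position, and is smooth elsewhere. Therefore $|\Sing(V(f))|=n$ and all singularities are isolated.

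The one point needing care in this direction is reducedness. If $f$ had a repeated factor $g^2$, then $V(f)$ would be singular along all of $V(g)$. Since $n\ge 3$, $V(g)$ is positive-dimensional, which contradicts finiteness of the singular set. So $f$ is reduced, and Theorem~\ref{thm:main-singular}, direction (b)$\Rightarrow$(a), gives $[f]\in\mathscr V_{n,d}$. This is also the content of Theorem~\ref{thm:nodal-stratum-in-VAH}.

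I expect no serious obstacle. The only delicate points are bookkeeping ones: matching the convention that $\mathscr V_{n,d}$ consists of reduced hypersurfaces with the reducedness of members of $\mathscr N_{n,d}$, which is handled by the argument above, and checking that ``exactly $n$ nodes'' in the definition of $\mathcal N_{n,d}$ includes smoothness away from $Z_{\underline p}$, which holds by construction.
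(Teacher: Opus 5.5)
Your proposal is correct and follows the paper's proof, which consists of the single observation that the statement is Theorem~\ref{thm:main-singular} rephrased in terms of $\mathscr N_{n,d}$. Your double-inclusion argument makes explicit the bookkeeping the paper leaves implicit: that $([f],\underline p)$ lies in $\mathcal N_{n,d}$, and that members of $\mathscr N_{n,d}$ are reduced because their singular locus is finite and $n\ge 3$.
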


\begin{proof}
	This is exactly Theorem~\ref{thm:main-singular}.
\end{proof}

\begin{Remark}
	Theorem~\ref{thm:classification-on-n-point-stratum} shows that the nodal stratum $\mathscr N_{n,d}$ is the natural singular part of the Veronese-Avoiding locus on the stratum of hypersurfaces with exactly $n$ singular points. In this range, the Veronese-Avoiding condition has a purely geometric characterization.
\end{Remark}

\begin{Remark}
	The preceding results determine a distinguished irreducible subvariety
	\[
	\mathscr N_{n,d}\subseteq \mathscr V_{n,d}
	\]
	of dimension $N_d-n$. It would be interesting to determine whether $\mathscr N_{n,d}$ is itself an irreducible component of $\mathscr V_{n,d}$, and more generally to describe the loci inside $\mathscr V_{n,d}$ corresponding to hypersurfaces with $r<n$ nodes imposing independent linear conditions. By Theorem~\ref{thm:nodal-r-points}, these loci are governed by the geometry of the rational map
	\[
	\phi_f:\PP^{\,n-r-1}\dashrightarrow \PP^{\,n-r-1},
	\]
	which suggests a finer stratification of $\mathscr V_{n,d}$ according to the base locus of $\phi_f$.
\end{Remark}
\section{A Lefschetz consequence of the Veronese-Avoiding condition}
\label{section:Lefschetz}

In this section we record a Lefschetz-type consequence of the
Veronese-Avoiding condition. Although the main result of this section follows
from \cite[Theorem~1.5 and Remark~1.7]{Alex-Giovanna-Abbas}, we include a
direct proof in the special setting of Milnor algebras. This proof makes clear
how the Veronese-avoidance condition enters.

Let $A=\bigoplus_{i\ge 0}A_i$ be a standard graded $k$-algebra. We say that
$A$ has the \emph{weak Lefschetz property in degree $q$} if, for a general
linear form $\ell\in A_1$, the multiplication map
\[
\times \ell:A_{q-1}\longrightarrow A_q
\]
has maximal rank. More generally, a multiplication map of the form
\[
\times \ell^s:A_i\longrightarrow A_{i+s}
\]
will be called a \emph{Lefschetz-type map}. When such a map has maximal rank
for a general linear form $\ell$, we say that $A$ satisfies the corresponding
Lefschetz-type property.

Let now $X=V(f)\subseteq \PP_k^{n-1}$ be a reduced hypersurface of degree
$d\ge 3$, and set
\[
T=n(d-2),\qquad m=T-1.
\]
Put $L_f:=(J_f)_m\subseteq R_m$. If $X$ is Veronese-Avoiding, then condition {\rm (I)} in Definition~\ref{VAH} gives $\dim_k R_m/L_f=n$. 
Moreover, condition {\rm (II)} says that
$\PP(L_f)\cap V_n^m=\emptyset$
or equivalently,
\[
\ell^m\notin L_f
\qquad\text{for every }0\neq \ell\in R_1.
\]

Let
\[
\pi:R_m\longrightarrow R_m/L_f
\]
be the natural quotient map. It induces a rational map
\[
\PP(R_m)\dashrightarrow \PP(R_m/L_f),\qquad [F]\longmapsto [\pi(F)].
\]
This map is not defined at the points $[F]\in \PP(R_m)$ for which
$\pi(F)=0$, that is, precisely at the points with $F\in L_f$. Thus its
indeterminacy locus is the projective linear subspace $\PP(L_f)$. After removing
this center, we obtain the linear projection
\[
\pi_{L_f}:\PP(R_m)\setminus \PP(L_f)\longrightarrow \PP(R_m/L_f),
\qquad [F]\longmapsto [F\bmod L_f].
\]

Recall that the Veronese variety $V_n^m\subseteq \PP(R_m)$ is the image of the
Veronese embedding
\[
v_n^m:\PP(R_1)\longrightarrow \PP(R_m),\qquad [\ell]\longmapsto [\ell^m].
\]
Since $X$ is Veronese-Avoiding, the center $\PP(L_f)$ is disjoint from
$V_n^m$. Hence every point $[\ell^m]$ of the Veronese variety lies in the domain
of the projection $\pi_{L_f}$. Therefore the composition
\[
\Phi_f:=\pi_{L_f}\circ v_n^m
\]
is a well-defined morphism
\[
\Phi_f:\PP(R_1)\longrightarrow \PP(R_m/L_f),
\qquad
[\ell]\longmapsto [\ell^m\bmod L_f].
\]
Since $\dim_k R_m/L_f=n$, the target is the projective space associated to an
$n$-dimensional vector space; hence
\[
\PP(R_m/L_f)\simeq \PP_k^{n-1}.
\]
Thus $\Phi_f$ is a morphism from $\PP(R_1)\simeq \PP_k^{n-1}$ to another
projective space of dimension $n-1$.

\begin{Theorem}\label{thm:VAH-degree-one-Lefschetz}
Let $X=V(f)\subseteq \PP_k^{n-1}$ be a Veronese-Avoiding hypersurface of degree
$d\ge 3$. Set $T=n(d-2)$. Then, for a
general linear form $\ell\in R_1$, the multiplication map
\[
\ell^{\,T-2}:(M_f)_1\longrightarrow (M_f)_{T-1}
\]
is an isomorphism.
\end{Theorem}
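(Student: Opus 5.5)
The strategy is to read the Lefschetz map as the differential of the morphism $\Phi_f$ built just before the statement. First I identify source and target. Since $J_f$ is generated in degree $d-1\ge 2$, we have $(J_f)_1=0$, so $(M_f)_1=R_1$ has dimension $n$. By condition (I), $(M_f)_{T-1}=R_m/L_f$ also has dimension $n$, where $m=T-1$. It therefore suffices to show that the linear map
\[
\lambda_\ell:R_1\to R_m/L_f,\qquad x\mapsto x\,\ell^{T-2}\bmod L_f
\]
is injective, equivalently has nonzero determinant, for general $\ell$. Nonvanishing of a determinant whose entries are polynomial in the coefficients of $\ell$ is a Zariski-open condition on $\ell$. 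So it is enough to find one $\ell$ for which $\lambda_\ell$ is an isomorphism, or to show that the determinant is not identically zero.

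Next I lift $\Phi_f$ to the affine map
\[
\Psi:R_1\to R_m/L_f,\qquad \ell\mapsto \ell^m\bmod L_f.
\]
After choosing bases, $\Psi$ is given by $n$ homogeneous polynomials of degree $m$ in $n$ variables. Its differential at $\ell$ in the direction $x$ is $m\,x\,\ell^{m-1}\bmod L_f=m\,\lambda_\ell(x)$, since $m-1=T-2$. Because $\operatorname{char}(k)=0$, the factor $m$ is a unit, so the Jacobian determinant of $\Psi$ at $\ell$ equals $m^n\det\lambda_\ell$. The statement is thus reduced to showing that the Jacobian determinant of $\Psi$ is not identically zero.

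The key input is condition (II). It says that $\Psi(\ell)=0$ forces $\ell=0$, so the $n$ component forms of $\Psi$ have only the trivial common zero. Hence they form a homogeneous system of parameters, indeed a regular sequence, in $k[R_1]\cong k[x_1,\ldots,x_n]$. By graded Nakayama, the polynomial ring is then a finite module over the subalgebra generated by these components. This makes $\Psi:\AA^n\to\AA^n$ a finite morphism, and since the dimensions agree it is surjective, hence dominant. Equivalently, $\Phi_f$ is a finite surjective endomorphism of $\PP^{n-1}$.

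In characteristic zero a dominant morphism between irreducible varieties of the same dimension is generically étale (generic smoothness, or separability of the field extension). Thus $d\Psi_\ell$ is an isomorphism for $\ell$ in a nonempty open set, which gives the theorem. The step I expect to need the most care is the finiteness and dominance argument, namely passing from ``no common nontrivial zero'' to ``the extension $k(\Psi)\subseteq k(x)$ is finite''. I would handle it exactly through the homogeneous system of parameters as above. Alternatively, I would argue directly that the fibres of $\Phi_f$ are finite, since a positive-dimensional fibre would be a projective curve contracted by a base-point-free map given by forms of positive degree, which is impossible.
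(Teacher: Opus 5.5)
Your argument is correct and follows the same strategy as the paper: identify $\ell^{T-2}$, up to the unit $m$, with the differential of $\ell\mapsto \ell^m \bmod L_f$, use condition (II) to make this map finite and dominant, and conclude by generic \'etaleness in characteristic zero. The differences are only in execution. You work on the affine cone $\Psi$ rather than on $\Phi_f:\PP(R_1)\to\PP(R_m/L_f)$, so the Jacobian determinant controls all of $R_1$ at once and you avoid the paper's second use of (II) to exclude the radial direction $h\in k\ell$. You also obtain finiteness from the homogeneous system of parameters and graded Nakayama, instead of the paper's argument that $\Phi_f^*\mathcal O(1)\simeq\mathcal O(m)$ prevents any curve from being contracted (which you mention as an alternative).
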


\begin{proof}
Set $m=T-1$ and $L_f=(J_f)_m$. Since $X$ is Veronese-Avoiding and $J_f$ is generated in degree $d-1\ge 2$, it follow that 
\[
\dim_k R_m/L_f=\dim_k(M_f)_m=\dim_k(M_f)_1=n.
\]
It is enough to prove that, for a general linear form $\ell\in R_1$, the map
\[
R_1\longrightarrow R_m/L_f,
\qquad
h\longmapsto h\ell^{m-1}\bmod L_f
\]
is injective.

Consider the morphism
\[
\Phi_f:\PP(R_1)\longrightarrow \PP(R_m/L_f),
\qquad
[\ell]\longmapsto [\ell^m\bmod L_f].
\]
This morphism is given by homogeneous forms of degree $m$ in the coordinates of
$\PP(R_1)$. Equivalently,
\[
\Phi_f^*\mathcal O_{\PP(R_m/L_f)}(1)\simeq \mathcal O_{\PP(R_1)}(m).
\]
Indeed, $\Phi_f$ is the composition of the $m$-th Veronese embedding with a linear projection, and the pullback of a hyperplane under the $m$-th Veronese
embedding is a hypersurface of degree $m$.

We claim that $\Phi_f$ cannot contract a positive-dimensional subvariety. It is enough to exclude the contraction of curves.  Indeed, if a positive-dimensional projective subvariety $Y\subseteq \PP(R_1)$ were contracted by $\Phi_f$ to a point, then $Y$ would contain an irreducible curve
$C\subseteq Y$, and this curve would also be contracted. Thus it suffices to
show that no irreducible curve is contracted. Suppose that an irreducible curve
$C\subseteq \PP(R_1)$ were contracted by $\Phi_f$ to a point. Then
\[
\deg\left(\Phi_f^*\mathcal O_{\PP(R_m/L_f)}(1)|_C\right)=0,
\]
because the pullback of a line bundle from a point has degree zero. On the other
hand, using the above identity, we get
\[
\Phi_f^*\mathcal O_{\PP(R_m/L_f)}(1)|_C
\simeq
\mathcal O_{\PP(R_1)}(m)|_C.
\]
Since $m>0$, this line bundle has positive degree on $C$, a contradiction. Therefore $\Phi_f$ does not contract any curve, and hence it does not contract
any positive-dimensional subvariety.

It follows that every fiber of $\Phi_f$ is zero-dimensional. Since $\PP(R_1)$ is projective, every zero-dimensional fiber is finite. Hence $\Phi_f$ has finite fibers. Finally, both the source and the target have dimension $n-1$. If the image of $\Phi_f$ had dimension strictly smaller than $n-1$, then the fiber
dimension theorem would imply that a general fiber has positive dimension, contradicting the finiteness of the fibers. Thus the image has dimension
$n-1$, and so $\Phi_f$ is dominant. Since $\Phi_f$ is projective and has finite
fibers, it is finite.

By the Generic Smoothness Theorem
\cite[Chapter~III, Corollary~10.7]{Hartshorne}, since
$\operatorname{char}(k)=0$, the finite dominant morphism
\[
\Phi_f:\PP(R_1)\longrightarrow \PP(R_m/L_f)
\]
is generically smooth. As $\Phi_f$ is finite, generic smoothness is equivalent
to generic \'etaleness. Hence, for a general point $[\ell]\in \PP(R_1)$, the
differential
\[
d\Phi_f|_{[\ell]}:T_{[\ell]}\PP(R_1)\longrightarrow
T_{\Phi_f([\ell])}\PP(R_m/L_f)
\]
is injective.

We now describe this differential explicitly. The tangent space to
$\PP(R_1)$ at $[\ell]$ is naturally identified with $R_1/k\ell$. Since
\[
\Phi_f([\ell])=[\ell^m\bmod L_f],
\]
the differential of $\Phi_f$ at $[\ell]$ is induced, up to the nonzero scalar
$m$, by the linear map
\[
R_1\longrightarrow R_m/L_f,\qquad
h\longmapsto h\ell^{m-1}\bmod L_f.
\]
We claim that the map
\[
R_1\longrightarrow R_m/L_f,\qquad
h\longmapsto h\ell^{m-1}\bmod L_f
\]
is injective for a general $\ell$. Suppose that
\[
h\ell^{m-1}\in L_f
\]
for some $h\in R_1$. Then the class of $h$ in $R_1/k\ell$ is killed by
$d\Phi_f|_{[\ell]}$. Since $d\Phi_f|_{[\ell]}$ is injective for general
$[\ell]$, it follows that $h\in k\ell$. Thus $h=c\ell$ for some $c\in k$.
Hence
\[
c\ell^m=h\ell^{m-1}\in L_f.
\]
But the Veronese-Avoiding condition gives $\ell^m\notin L_f$ for every
nonzero $\ell\in R_1$. Therefore $c=0$, and so $h=0$. This proves the desired
injectivity.

Since both $R_1$ and $R_m/L_f$ have dimension $n$, the above injective map is
an isomorphism for a general linear form $\ell$. Recalling that $m=T-1$, this
is exactly the multiplication map
\[
\ell^{T-2}:(M_f)_1\longrightarrow (M_f)_{T-1}.
\]
The proof is complete.
\end{proof}

\begin{Remark}
Theorem~\ref{thm:VAH-degree-one-Lefschetz} is a Lefschetz-type consequence of
the definition of Veronese-Avoiding hypersurfaces. It does not assert that the
Milnor algebra has the full strong Lefschetz property. Rather, it says that the
top Jacobian piece defining the Veronese-Avoiding condition forces the
distinguished multiplication map
\[
\ell^{T-2}:(M_f)_1\longrightarrow (M_f)_{T-1}
\]
to be an isomorphism for a general linear form $\ell$.
\end{Remark}

We close with two consequences showing how the Lefschetz-type isomorphism above
specializes on the singular loci classified in Section~\ref{sectionVAH}.
\begin{Remark}
Combining Theorem~\ref{thm:VAH-degree-one-Lefschetz} with the singular
classification results obtained above, we get the following consequences.
\begin{enumerate}
\item Let $X=V(f)\subseteq \PP_k^{n-1}$ be a reduced hypersurface of degree
$d\ge 3$ with exactly $n$ isolated singular points. By
Theorem~\ref{thm:main-singular}, $X$ is Veronese-Avoiding if and only if
$\Sing(X)$ consists of $n$ nodes in general linear position. Hence, in this
case, Theorem~\ref{thm:VAH-degree-one-Lefschetz} implies that, for a general
linear form $\ell$, the map
\[
\ell^{\,T-2}:(M_f)_1\longrightarrow (M_f)_{T-1}
\]
is an isomorphism.

\item In the case of reduced singular plane cubics, this Lefschetz-type
statement becomes an ordinary weak Lefschetz statement. Indeed, if
$X=V(f)\subseteq \PP_k^2$ is a reduced singular cubic curve, then
$n=d=3$, so $T=3$ and $T-2=1$. By
Proposition~\ref{prop:cubic-plane-classification}, $X$ is Veronese-Avoiding if
and only if $X$ is nodal. Therefore, for every reduced nodal cubic curve, and
for a general linear form $\ell$, the map
\[
\ell:(M_f)_1\longrightarrow (M_f)_2
\]
is an isomorphism. In particular, $M_f$ has the weak Lefschetz property in
degree $2$.
\end{enumerate}
\end{Remark}

\end{document}